\documentclass{amsart}
\usepackage{xcolor}
\usepackage{hyperref}
\hypersetup{
	colorlinks=true,
	citecolor=blue,
	linkcolor=blue,
	filecolor=magenta,      
	urlcolor=cyan,
}
\usepackage{zref-clever}
\zcsetup{cap,noabbrev,nameinlink=false}

\zcsetup{
  countertype={
    cthm=theorem,
    ccor=corollary,
    que=question
  }
}

\AddToHook{env/thm/begin}{\zcsetup{countertype={equation=theorem}}}
\AddToHook{env/thm/end}{\zcsetup{countertype={equation=}}}
\AddToHook{env/prop/begin}{\zcsetup{countertype={equation=proposition}}}
\AddToHook{env/prop/end}{\zcsetup{countertype={equation=}}}
\AddToHook{env/lem/begin}{\zcsetup{countertype={equation=lemma}}}
\AddToHook{env/lem/end}{\zcsetup{countertype={equation=}}}
\AddToHook{env/cor/begin}{\zcsetup{countertype={equation=corollary}}}
\AddToHook{env/cor/end}{\zcsetup{countertype={equation=}}}
\AddToHook{env/dfn/begin}{\zcsetup{countertype={equation=definition}}}
\AddToHook{env/dfn/end}{\zcsetup{countertype={equation=}}}
\AddToHook{env/rem/begin}{\zcsetup{countertype={equation=remark}}}
\AddToHook{env/rem/end}{\zcsetup{countertype={equation=}}}
\AddToHook{env/exa/begin}{\zcsetup{countertype={equation=example}}}
\AddToHook{env/exa/end}{\zcsetup{countertype={equation=}}}
\AddToHook{env/conj/begin}{\zcsetup{countertype={equation=conjecture}}}
\AddToHook{env/conj/end}{\zcsetup{countertype={equation=}}}

\usepackage[all, knot]{xy}\SelectTips{eu}{}

\usepackage{amsmath,amscd}
\usepackage{amssymb}
\usepackage{mathtools}
\usepackage{stmaryrd}
\usepackage{url}
\usepackage{tikz-cd}
\usepackage{enumitem}

\usepackage{setspace}
\usepackage{microtype}

\usepackage{times}

\usepackage{tikz}

\newtheorem{thm}[equation]{Theorem}
\newtheorem*{thm*}{Theorem}
\newtheorem*{cor*}{Corollary}
\newtheorem*{dfn*}{Definition}

\newtheorem{cthm}{Theorem}
\newtheorem{ccor}[cthm]{Corollary}

\newtheorem{cor}[equation]{Corollary}
\newtheorem{prop}[equation]{Proposition}
\newtheorem{lem}[equation]{Lemma}

\theoremstyle{definition}

\newtheorem{rem}[equation]{Remark}

\newtheorem{exa}[equation]{Example}

\newcommand{\inj}{\hookrightarrow}

\newcommand{\opn}{\operatorname}
\newcommand{\cat}[1]{\operatorname{\mathsf{#1}}}

\newcommand{\mfrak}[1]{\mathfrak{#1}}

\newcommand{\mrm}[1]{\mathrm{#1}}
\newcommand{\mbb}[1]{\mathbb{#1}}

\newcommand{\K}{\mbb{K} \hspace{0.05em}}

\newcommand{\m}{\mfrak{m}}
\newcommand{\n}{\mfrak{n}}
\newcommand{\p}{\mfrak{p}}
\newcommand{\q}{\mfrak{q}}
\newcommand{\injdim}{\operatorname{inj\,dim}}
\newcommand{\projdim}{\operatorname{proj\,dim}}

\title{K-injective complexes in commutative algebra}

\author{Liran Shaul}

\address{Department of Mathematics, University of Haifa, Haifa 3498838, Israel}

\email{shaul@math.haifa.ac.il}

\thanks{{\em Mathematics Subject Classification} 2020:
13B30, 13C11, 18G80}

\numberwithin{equation}{section}

\begin{document}

\begin{abstract}
It is an unfortunate fact that even over commutative noetherian rings,
localization of K-injective complexes need not be K-injective.
However, not all is lost.
Given a complex of injective modules $J$ over a commutative noetherian ring $A$,
we show that K-injectivity of $J$ can be detected locally:
if $(A,\m)$ is a noetherian local ring, then $J$ is K-injective over $A$
if and only if $\opn{Hom}_A(\widehat{A},J)$ is K-injective over $\widehat{A}$,
the $\m$-adic completion of $A$; and for an arbitrary commutative noetherian ring $A$,
$J$ is K-injective over $A$ if and only if $\opn{Hom}_A(A_{\m},J)$
is K-injective over $A_{\m}$ for every maximal ideal $\m$ in the singular locus of $A$.
The local result is deduced from a more general theorem on faithfully flat descent
of K-injectivity along maps to (possibly noncommutative, possibly non-noetherian) rings.
The global result is established via the local-to-global principle in tensor-triangulated geometry.
As an application, we generalize a theorem of Grothendieck on faithfully flat descent of regularity,
extending it to faithfully flat extensions by noncommutative left coherent left regular rings.
\end{abstract}

\maketitle

\setcounter{section}{-1}
\section{Introduction}

The aim of this paper is to study the behavior of K-injective complexes under standard operations in commutative algebra: localization, adic completion, and faithfully flat extensions. We start our discussion with localization.

If $A$ is an arbitrary commutative ring,
$\p \in \opn{Spec}(A)$
and $P$ is a projective (respectively flat) $A$-module,
then it is always the case that $P_{\p}$ is a projective (resp.\ flat) $A_{\p}$-module.
For flatness, the converse holds over any commutative ring. 
However, for projectivity, even for rings as nice as $\mathbb{Z}$,
the fact that $P_{\p}$ is projective over $A_{\p}$ for all $\p \in \opn{Spec}(A)$ does not imply that $P$ is projective over $A$
(see \cite[Exercise 19.12]{Eisenbud} for a concrete example over $\mathbb{Z}$).

For injective modules, in general, the situation is even worse.
The fact that $I$ is an injective $A$-module does not imply that $I_{\p}$ is injective over $A_{\p}$. However, over commutative noetherian rings,
the situation is actually even better than the projective case, and it does hold that an $A$-module $I$ is injective over $A$ if and only if $I_{\p}$ is injective over $A_{\p}$ for all $\p \in \opn{Spec}(A)$.

As all of this is well understood, a natural next step is to study the situation for complexes of $A$-modules. In that case, as is well known since the work of Spaltenstein \cite{Spal}, 
the correct generalization of these classes of modules to unbounded complexes is via the notions of K-projective, K-flat and K-injective complexes.
This paper deals with locality questions for these complexes, focusing on the K-injective case, as the K-projective and K-flat analogues turn out to be straightforward (see \zcref{section:Proj}).

Let $A$ be a commutative ring, and let $J$ be a complex of injective $A$-modules.
Recall that $J$ is called K-injective if for any acyclic complex $X$ of $A$-modules,
it holds that $\opn{Hom}_A(X,J)$ is also acyclic. If $J$ is bounded below,
this is of course automatic, but in the absence of such a boundedness condition,
a complex of injectives can fail to be K-injective, as long as the ring is not regular \cite{CET,II}. K-injective complexes are needed to define right derived functors on the entire unbounded derived category.

Unfortunately, as was observed by Neeman \cite[Example 6.5]{Neeman96} (explained in detail in \cite{Belmans} and \cite[Example 2.9]{Pos}),
even over rings as simple as $A = \mathbb{Z}[x]/(x^2)$,
it could happen that $J$ is K-injective over $A$,
but $J[s^{-1}]$ is not K-injective over $A[s^{-1}]$ for some $s \in A$.
Moreover, \cite[Proposition 2.7]{CI} constructs over $A = \mathbb{Z}[x]/(x^2)$,
a K-injective complex $J$ and a prime ideal $\p$ such that $J_{\p}$ is not K-injective.

As Positselski explained in his recent paper \cite{Pos},
the correct way to overcome this difficulty is to use colocalization instead of localization. That is, instead of passing from $J$ to $J_{\p} = J\otimes_A A_{\p}$,
one considers the colocalization $\opn{Hom}_A(A_{\p},J)$.
If $J$ is K-injective over $A$, adjunction implies it is always the case,
over any commutative ring $A$, that $\opn{Hom}_A(A_{\p},J)$ is K-injective over $A_{\p}$ for all $\p \in \opn{Spec}(A)$. The natural next question is: does the converse hold?
As a first step toward answering this question, Positselski showed \cite[Example 3.9]{Pos} that over any commutative ring $A$, if $J$ is a complex of injective $A$-modules,
and $a_1,\dots,a_n \in A$ is a finite sequence of elements that generate the unit ideal,
then $J$ is K-injective over $A$ if and only if $\opn{Hom}_A(A[a_i^{-1}],J)$ is K-injective over $A[a_i^{-1}]$ for all $1\le i \le n$.

Positselski's result deals with a finite open cover. In this paper, 
specializing to commutative noetherian rings, we show that a significantly stronger property holds.
The first main result of this paper (see \zcref{thm:main-localize}) states:
\begin{cthm}\label{cthm:loc}
Let $A$ be a commutative noetherian ring,
and let $J$ be a complex of injective $A$-modules.
Then the following are equivalent:
\begin{enumerate}
\item $J$ is K-injective over $A$.
\item For all $\p \in \opn{Spec}(A)$,
the complex $\opn{Hom}_A(A_{\p},J)$ is K-injective over $A_{\p}$.
\item For any maximal ideal $\m$ in the singular locus of $A$,
it holds that $\opn{Hom}_A(A_{\m},J)$ is K-injective over $A_{\m}$.
\end{enumerate}
\end{cthm}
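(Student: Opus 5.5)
The implications (1)$\Rightarrow$(2)$\Rightarrow$(3) are formal. For (1)$\Rightarrow$(2), if $X$ is an acyclic complex of $A_{\p}$-modules then, by adjunction,
\[
\opn{Hom}_{A_{\p}}(X,\opn{Hom}_A(A_{\p},J)) \cong \opn{Hom}_A(X,J),
\]
and the right-hand side is acyclic because $X$ is also acyclic over $A$. Moreover, $\opn{Hom}_A(A_{\p},J)$ is a complex of injective $A_{\p}$-modules, since $\opn{Hom}_A(A_{\p},-)$ is right adjoint to the exact restriction functor. The implication (2)$\Rightarrow$(3) is a specialization. All the work is therefore in (3)$\Rightarrow$(1), and I plan to carry it out in the unbounded derived category $\mathcal{D}(A)$ using the local-to-global principle of tensor-triangulated geometry, as the abstract announces.

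For (3)$\Rightarrow$(1), I reformulate K-injectivity. A complex of injectives $J$ is K-injective if and only if $\opn{Hom}_{\K(A)}(X,J[n])=0$ for every acyclic $X$ and all $n$. Since the homotopy category of injectives $\K(\opn{Inj} A)$ is compactly generated for $A$ noetherian (Krause), this is equivalent to $\opn{Hom}(S,J)=0$ for $S$ ranging over a suitable class of acyclic complexes of injectives. Krause's recollement identifies these with the acyclic part $\K_{\mathrm{ac}}(\opn{Inj} A)$ (the stable category), and $J$ is K-injective if and only if $J$ is right orthogonal to $\K_{\mathrm{ac}}(\opn{Inj}A)$. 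The category $\K(\opn{Inj}A)$ carries an action of $\mathcal{D}(A)$, and the Stevenson/Benson--Iyengar--Krause local-to-global principle holds for it: every object $S$ lies in the localizing subcategory generated by the $\Gamma_{\p} S$, $\p\in\opn{Spec}A$. It therefore suffices to show $\opn{Hom}(\Gamma_{\p}S,J)=0$ for all $\p$ and all acyclic $S$.

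Next I localize. Since $\Gamma_{\p}S \cong \Gamma_{\p}A\otimes S$ is $\p$-local, adjunction gives
\[
\opn{Hom}(\Gamma_{\p} S,J)\cong \opn{Hom}(\Gamma_{\p}S,\opn{Hom}_A(A_{\p},J)).
\]
Here $\Gamma_{\p}S$ is an acyclic complex of injective $A_{\p}$-modules: for injectives over a noetherian ring, the $\p$-torsion of a localization remains injective. So the vanishing follows from K-injectivity of $\opn{Hom}_A(A_{\p},J)$, which reduces (3)$\Rightarrow$(1) to passing from maximal ideals in the singular locus to all primes. If $\p$ is a regular point, $\Gamma_{\p}S=0$: by Stevenson's description the support of the stable category is the singular locus, since over the regular ring $A_{\p}$ every acyclic complex of injectives is contractible. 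If $\p$ is singular, choose a maximal ideal $\m\supseteq \p$, which is again singular because the singular locus is closed under specialization. Then $\Gamma_{\p}S$ is $\m$-local, so $\opn{Hom}(\Gamma_{\p}S,J)\cong \opn{Hom}(\Gamma_{\p}S,\opn{Hom}_A(A_{\m},J))$. This vanishes because $\opn{Hom}_A(A_{\m},J)$ is K-injective over $A_{\m}$ and $\Gamma_{\p}S$ is an acyclic complex of $A_{\m}$-modules. In fact this step does not even need the passage through $\p$, only through $\m$.

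The main obstacle is making the local-to-global step rigorous. It requires showing that the action of $\mathcal{D}(A)$ on $\K(\opn{Inj}A)$ restricts to the acyclic subcategory, and that the support of every acyclic $S$ is contained in the singular locus. One also has to check that vanishing of $\opn{Hom}(-,J)$ on the generators $\Gamma_{\p}S$ propagates to the localizing subcategory they generate. This last point holds because the right orthogonal of any class is closed under the operations generating a localizing subcategory (triangles, shifts, coproducts). I would first try to cite Stevenson's results on the singularity category and its local-to-global principle directly, with $A$ noetherian of possibly infinite Krull dimension handled by his general theorem.
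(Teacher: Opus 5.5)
Your proposal is correct and follows essentially the paper's route: reduce K-injectivity to orthogonality against $\cat{K}_{\opn{ac}}(\opn{Inj} A)$, invoke Stevenson's local-to-global principle, and kill $\opn{Hom}_A(\Gamma_{\p}(X_{\p}),J)$ by the adjunction that is available because $\Gamma_{\p}(X_{\p})$ is a complex of $A_{\p}$-modules. The only deviation is minor. The paper first proves (3)$\Rightarrow$(2), using that complexes of injectives over regular local rings are K-injective and factoring $A\to A_{\m}\to A_{\p}$; you instead go straight to (1), noting $\Gamma_{\p}X=0$ at regular primes and factoring through a singular maximal ideal $\m\supseteq\p$ at singular ones, which rests on the same regularity fact.
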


Thus, not only can one check K-injectivity at prime ideals, but it is sufficient to test it after passing to maximal ideals where the ring is singular.
The proof of this result builds on Stevenson's work \cite{St1,St2} on tensor-triangulated actions of the derived category $\cat{D}(A)$ on the big singularity category $\cat{K}_{\opn{ac}}(\opn{Inj} A)$.

Next, we discuss faithfully flat extensions, and adic completion.
In the case of modules over a ring, it is straightforward that if $A \to B$ is a faithfully flat map between arbitrary commutative rings, and $F$ is an $A$-module, 
then $F$ is flat over $A$ if and only if $F\otimes_A B$ is flat over $B$.
Much less obvious, but also true, is that an $A$-module $P$ is projective over $A$ if and only if $P\otimes_A B$ is projective over $B$. This was first proved in \cite{RG},
an error in the proof was corrected in \cite{perry}, and the result was recently  formalized in Lean in \cite{shFormal}.
For injective modules, one of course cannot expect that the base change of an injective module will be injective. Indeed, taking $A= \K$ to be a field, and $B = \K[x,y]/(x^2,xy,y^2)$, we obviously have that $A \to B$ is faithfully flat, and $I = \K$ is injective over $A$, but $I\otimes_A B = B$ is not injective over $B$.
Instead, again, it is more reasonable to consider $\opn{Hom}_A(B,I)$, which trivially will be injective over $B$ as soon as $I$ is injective over $A$. 
However, the converse is false even if $(A,\m)$ is a noetherian local ring,
and $B$ is the $\m$-adic completion of $A$. See \zcref{ex:notforInj} for an example.

Passing back to unbounded complexes, it is again relatively easy to show (see \zcref{section:Proj}), that if $F$ (respectively $P$) is a complex of flat (resp.\ projective) $A$-modules, then $F$ is K-flat (resp. $P$ is K-projective) over $A$ if and only if $F\otimes_A B$ is K-flat (resp. $P\otimes_A B$ is K-projective) over $B$.

In view of the above failure for injective modules, we find it rather surprising that for complexes of injective modules, we have the following result, our second main result (see \zcref{thm:faithflat}). In it, the ring $B$ does not even have to be commutative:

\begin{cthm}\label{cthm:ff}
Let $A$ be a commutative noetherian ring.
Let $B$ be a (not necessarily commutative) central $A$-algebra which is faithfully flat over $A$.
Let $J$ be a complex of injective $A$-modules.
Then $J$ is K-injective over $A$ if and only if
$\opn{Hom}_A(B,J)$ is K-injective over $B$ as a complex of left $B$-modules.    
\end{cthm}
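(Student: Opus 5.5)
The forward implication is formal; for the converse I would reduce, using Krause's recollement, to testing against acyclic complexes of injectives, and then descend along $A \to B$ using Stevenson's classification of localizing submodules of $\cat{K}_{\opn{ac}}(\opn{Inj} A)$.

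\emph{Forward direction.} Suppose $J$ is K-injective over $A$, and let $X$ be an acyclic complex of left $B$-modules. Restricting scalars, $X$ is an acyclic complex of $A$-modules. Since $B$ is central over $A$, adjunction gives
\[
\opn{Hom}_B(X,\opn{Hom}_A(B,J)) \cong \opn{Hom}_A(X,J),
\]
and the right-hand side is acyclic. So $\opn{Hom}_A(B,J)$ is K-injective over $B$.

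\emph{Reduction of the converse.} Assume $\opn{Hom}_A(B,J)$ is K-injective over $B$. Since $A$ is noetherian, Krause's recollement
\[
\cat{K}_{\opn{ac}}(\opn{Inj} A) \to \cat{K}(\opn{Inj} A) \to \cat{D}(A)
\]
identifies the K-injective complexes of injectives with the right orthogonal of $\cat{K}_{\opn{ac}}(\opn{Inj} A)$. It therefore suffices to show that $\opn{Hom}_{\cat{K}(A)}(X,J[n])=0$ for every acyclic complex of injectives $X$ and every $n$.

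\emph{Acyclic complexes of the form $B\otimes_A X$ lie in the orthogonal.} Let $X$ be an acyclic complex of injective $A$-modules. Since $B$ is flat over $A$ and $A$ is noetherian, each $B\otimes_A X^i$ is injective over $A$: it is a direct limit of finite free modules tensored with an injective, and injectives are closed under direct limits over noetherian rings. Flatness also makes $B\otimes_A X$ acyclic. Since $B\otimes_A X$ is a complex of left $B$-modules, adjunction gives
\[
\opn{Hom}_A(B\otimes_A X,J)\cong \opn{Hom}_B(B\otimes_A X,\opn{Hom}_A(B,J)),
\]
which is acyclic by hypothesis. Hence $B\otimes_A X$ lies in the full subcategory
\[
\mathcal{S}=\{Y\in \cat{K}_{\opn{ac}}(\opn{Inj} A) : \opn{Hom}(Y,J[n])=0 \ \forall n\}.
\]

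\emph{Descent to $X$ itself.} This is where I expect the real difficulty, because $X$ need not be a direct summand of $B\otimes_A X$. The sequence $0\to X\to B\otimes_A X\to (B/A)\otimes_A X\to 0$ is degreewise split, since $B/A$ is flat by purity, but using it directly is circular. Instead I would use Stevenson's action of $\cat{D}(A)$ on $\cat{K}_{\opn{ac}}(\opn{Inj} A)$, written $M\odot Y$. Set
\[
\mathcal{S}'=\{Y : M\odot Y\in\mathcal{S}\ \forall M\in\cat{D}(A)\}.
\]
\begin{enumerate}
\item $\mathcal{S}'$ is a localizing submodule. It is triangulated, and closed under coproducts because $\opn{Hom}(-,J)$ turns coproducts into products and the action commutes with coproducts.
\item For every $Y\in \cat{K}_{\opn{ac}}(\opn{Inj} A)$ we have $B\odot Y\in\mathcal{S}'$. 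Indeed $M\odot(B\otimes_A Y)\cong B\otimes_A(M\odot Y)$, which lies in $\mathcal{S}$ by the previous step. Checking this compatibility of the action with tensoring by the flat module $B$ needs a little care.
\item By Stevenson's classification for noetherian $A$, localizing submodules are determined by support in the singular locus. Faithful flatness gives $B\otimes_A k(\p)\neq 0$ for every prime $\p$, so $\opn{supp}(B\odot X)=\opn{supp}(X)$. Hence $X\in\opn{Loc}^{\odot}(B\odot X)\subseteq\mathcal{S}'$.
\end{enumerate}
Taking $M=A$ gives $X\in\mathcal{S}$. By the reduction step, $J$ is K-injective.
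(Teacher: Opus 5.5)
Your reduction to $\cat{K}_{\opn{ac}}(\opn{Inj}A)$ is fine, and so are steps (1)–(2). So is the check that $B\otimes_A X$ is an acyclic complex of injective $A$-modules lying in $\mathcal{S}$, via $\opn{Hom}_A(B\otimes_A X,J)\cong\opn{Hom}_B(B\otimes_A X,\opn{Hom}_A(B,J))$. In fact $\mathcal{S}'=\mathcal{S}$ anyway: $\cat{D}(A)=\opn{Loc}(A)$ forces every localizing subcategory of $\cat{K}_{\opn{ac}}(\opn{Inj}A)$ to be a $\cat{D}(A)$-submodule.

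The gap is step (3). Stevenson's theorem that localizing $\cat{D}(A)$-submodules of $\cat{K}_{\opn{ac}}(\opn{Inj}A)$ are determined by their support in $\opn{Sing}(A)$ is proved only for rings with hypersurface singularities. That hypothesis is what makes each local piece $\mathbf{\Gamma}_{\p}\cat{K}_{\opn{ac}}(\opn{Inj}A)$ minimal, and the classification is false for general noetherian rings. Take $A=k[x,y]/(x^2,y^2)$. Then $\opn{Sing}(A)=\{\m\}$, so the classification would leave only $0$ and $\cat{K}_{\opn{ac}}(\opn{Inj}A)$. But the localizing subcategory generated by the complete resolution $\cdots\xrightarrow{x}A\xrightarrow{x}A\xrightarrow{x}\cdots$ of $A/xA$ is nonzero and does not contain the complete resolution of $k$ (compare complexities, $1$ versus $2$). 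So your equality $\opn{supp}(B\odot X)=\opn{supp}(X)$, though correct, does not by itself give $X\in\opn{Loc}^{\odot}(B\odot X)$.

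The conclusion of step (3) is nevertheless true, and the input you already isolated, $B\otimes_A k(\p)\neq 0$, suffices if you use the classification in $\cat{D}(A)$ rather than in the singularity category. Neeman's theorem holds for every noetherian ring, and it gives $\opn{Loc}_{\cat{D}(A)}(B)=\cat{D}(A)$, because $k(\p)\otimes^{\mathrm{L}}_A B=k(\p)\otimes_A B\neq0$ for all $\p$. Since $B$ is flat, $B\odot X=B\otimes_A X$. The subcategory $\{M\in\cat{D}(A) : M\odot X\in\opn{Loc}(B\otimes_A X)\}$ is localizing in $\cat{D}(A)$ and contains $B$, hence contains $A$. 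Thus $X\cong A\odot X\in\opn{Loc}(B\otimes_A X)\subseteq\mathcal{S}$.

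With this repair your argument is correct and genuinely different from the paper's, which uses no tensor action here. The paper takes the acyclic cone $T$ of a K-injective resolution $J\to I$ and observes that $\opn{Hom}_A(B,T)$ is acyclic, since Hom out of a flat module into an acyclic complex of injectives is acyclic. It is therefore contractible, because both $\opn{Hom}_A(B,J)$ and $\opn{Hom}_A(B,I)$ are K-injective. The paper then descends contractibility by showing each $\ker(d^n_T)$ is injective, via Christensen--Iyengar's equality $\injdim_A N=\injdim_A\mrm{R}\opn{Hom}_A(B,N)$. Your route trades that cosupport-type input for the support-type fact $\opn{Loc}(B)=\cat{D}(A)$, and works with $B\otimes_A-$ instead of $\opn{Hom}_A(B,-)$. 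As a by-product it shows that the complexes $B\otimes_A X$ generate $\cat{K}_{\opn{ac}}(\opn{Inj}A)$ as a localizing subcategory.
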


As an immediate corollary, we obtain:
\begin{ccor}
Let $(A,\m)$ be a noetherian local ring,
and denote by $\widehat{A}$ its $\m$-adic completion.
Let $J$ be a complex of injective $A$-modules.
Then $J$ is K-injective over $A$ if and only if $\opn{Hom}_A(\widehat{A},J)$ is K-injective over $\widehat{A}$.
\end{ccor}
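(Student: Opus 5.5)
This corollary should follow by specializing \zcref{cthm:ff} to $B = \widehat{A}$. So the plan is to check that the hypotheses of that theorem hold for the completion map $A \to \widehat{A}$, and then to identify the resulting statement with the one above.

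First, $\widehat{A}$ is a commutative $A$-algebra, so it is trivially a central $A$-algebra; its left modules are just $\widehat{A}$-modules. Second, $\widehat{A}$ is flat over $A$ because $A$ is noetherian; this is the standard fact that $\m$-adic completion of a noetherian ring is flat, coming from exactness of completion on finitely generated modules via Artin--Rees. Third, flatness upgrades to faithful flatness because $A$ is local. Indeed, $\widehat{A}$ is local with maximal ideal $\m\widehat{A}$, and $\widehat{A}/\m\widehat{A} \cong A/\m \neq 0$. Hence $\m\widehat{A} \neq \widehat{A}$, and a flat local homomorphism of local rings is faithfully flat.

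With these three facts in hand, \zcref{cthm:ff} applied to $B = \widehat{A}$ says directly that a complex of injective $A$-modules $J$ is K-injective over $A$ if and only if $\opn{Hom}_A(\widehat{A},J)$ is K-injective over $\widehat{A}$. That is exactly the corollary.

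No step here is a real obstacle. All the substance lies in \zcref{cthm:ff}, which we may assume. The only point needing care is the faithful flatness of $\widehat{A}$ over $A$, and this is where the local hypothesis is used: for a non-local ring, the $I$-adic completion is flat but typically not faithfully flat.
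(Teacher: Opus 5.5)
Your proposal is correct and follows the paper's route: the paper obtains this as a special case of \zcref{cor:radComp}, which applies \zcref{thm:faithflat} to $B=\widehat{A}$ once $A\to\widehat{A}$ is known to be faithfully flat (the paper cites the criterion $I\subseteq\opn{rad}(A)$, while you argue via a flat local homomorphism). One small correction to your closing remark: locality is more than you need, since the $I$-adic completion is already faithfully flat whenever $I$ lies in the Jacobson radical.
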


As an application of the above result, we obtain the following generalization of a result of Grothendieck:
\begin{ccor}
Let $A$ be a commutative noetherian ring.
Let $B$ be a (not necessarily commutative) central $A$-algebra which is faithfully flat over $A$.
Assume that $B$ is left coherent and left regular.
Then $A$ is regular.
\end{ccor}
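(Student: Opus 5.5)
The plan is to deduce this from \zcref{cthm:ff}, via the standard characterization of regularity of a commutative noetherian ring in terms of acyclic complexes of injectives. Recall Krause's result on the stable derived category: for a commutative noetherian ring $A$, the homotopy category $\cat{K}_{\opn{ac}}(\opn{Inj} A)$ of acyclic complexes of injective $A$-modules vanishes if and only if $A$ is regular, i.e.\ $A_{\p}$ is regular for all $\p$. Moreover, a complex of injectives that is both acyclic and K-injective is contractible, since $\opn{Hom}_A(J,J)$ is then acyclic and so $1_J$ is null-homotopic. Hence the goal becomes: \emph{every complex of injective $A$-modules is K-injective}.

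\textbf{Step 1.} Let $J$ be an arbitrary complex of injective $A$-modules. Since $B$ is flat over $A$, the adjunction
\[
\opn{Hom}_B(-,\opn{Hom}_A(B,J)) \cong \opn{Hom}_A(-,J)
\]
shows that each $\opn{Hom}_A(B,J^n)$ is an injective left $B$-module. So $\opn{Hom}_A(B,J)$ is a complex of injective left $B$-modules.

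\textbf{Step 2.} Show that over the left coherent, left regular ring $B$, every complex of injective left $B$-modules is K-injective. Then $\opn{Hom}_A(B,J)$ is K-injective over $B$, and \zcref{cthm:ff} gives that $J$ is K-injective over $A$.

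\textbf{Step 3.} Apply Step 2 to an arbitrary acyclic complex of injective $A$-modules. By the contractibility remark above it is contractible, so $\cat{K}_{\opn{ac}}(\opn{Inj} A)=0$, and $A$ is regular by Krause's theorem. To avoid relying on the global form of that theorem, one can instead localize: for each $\p$, every complex of injective $A_{\p}$-modules is also a complex of injective $A$-modules, so the same conclusion holds over $A_{\p}$. Then apply the local statement, which is available as in \cite{CET,II}: a local ring over which every complex of injectives is K-injective is regular.

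I expect Step 2 to be the main obstacle, because $B$ is neither commutative nor noetherian. I would first look for a known result: over a left coherent ring, a complex of injectives is K-injective as soon as it is right orthogonal to acyclic complexes of finitely presented (or fp-projective) modules, and left regularity makes such acyclic complexes behave like complexes with finite projective dimension terms. Concretely, I would try to show directly that every acyclic complex of injective left $B$-modules is contractible, by the following route:

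1. Each finitely presented module $M$ has a finite resolution by finitely generated projectives, so $\opn{Hom}_B(M,I)$ is acyclic for every acyclic complex $I$ of injectives.
2. Over a coherent ring, acyclicity against all finitely presented $M$ forces the cycle modules $Z^n(I)$ to be fp-injective.
3. Regularity should then upgrade fp-injective cycles to injective ones, giving contractibility.
4. Finally, the passage from ``acyclic complexes of injectives are contractible'' to ``all complexes of injectives are K-injective'' goes via the recollement $\cat{K}_{\opn{ac}}(\opn{Inj} B)\to\cat{K}(\opn{Inj} B)\to\cat{D}(B)$. I would cite Šťovíček's coherent-ring version of Krause's recollement, and, if necessary, adapt the argument of \cite{CET,II} to the left coherent setting.
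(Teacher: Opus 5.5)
Your outline is the paper's proof. First, $\opn{Hom}_A(B,J)$ is a complex of injective left $B$-modules. Second, it is K-injective because $B$ is left coherent and left regular. Third, \zcref{cthm:ff} then makes $J$ K-injective. Finally, a commutative noetherian ring over which every complex of injectives is K-injective is regular \cite[Theorem 2.8]{II}. The step you flag as the main obstacle needs no new argument: it is exactly \cite[Theorem 2.1]{CET}. That result is already stated for left coherent left regular rings, and the paper simply cites it.

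Your fallback sketch for Step 2 would not work as written, though. Item 3 is false as a statement about modules: regularity does not turn fp-injective modules into injective ones. By Megibben's theorem, all fp-injective left modules are injective only over left noetherian rings, and $\K[x_1,x_2,\dots]$ over a field $\K$ is coherent and regular but not noetherian.

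The cycles become injective for a reason that concerns the whole complex. Once every $Z^n(I)$ is fp-injective, each sequence $0\to Z^n(I)\to I^n\to Z^{n+1}(I)\to 0$ is pure exact. So $I$ is a pure-acyclic complex of pure-injective modules, and such complexes are contractible by \v{S}\v{t}ov\'{\i}\v{c}ek \cite{Stovicek}. Alternatively, use compact generation of $\cat{K}(\opn{Inj} B)$ by injective resolutions of finitely presented modules. Either way, regularity is used only in item 1.

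Item 4 needs no recollement. \zcref{prop:kinjifAcInj} holds over any ring, so once every acyclic complex of injective left $B$-modules is contractible, every complex of injective left $B$-modules is K-injective.
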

See \zcref{cor:regular} and the discussion preceding it for more details.

Let us briefly describe the contents of the paper.
In \zcref{sec:prel} we discuss various preliminaries about complexes of injective modules, as well as acyclic complexes,
and explain the relation between K-injectivity and the singularity category.
\zcref{sec:FF} is dedicated to faithfully flat extensions, 
and to a proof of \zcref{cthm:ff} and its corollaries.
Afterwards, in \zcref{sec:loc}, we discuss the local-to-global problem,
gather various preliminaries from the theory of tensor-triangulated actions, and use them to prove \zcref{cthm:loc}.
The final short \zcref{section:Proj} is dedicated to the analogous problems for K-projective and K-flat complexes, where the situation is much simpler.

\textbf{Acknowledgments.}

The author thanks Jindřich Novák and Amnon Yekutieli for some helpful discussions.
Parts of this work were carried out while the author worked at the department of Algebra at Charles University in Prague. 
This work has been partially supported by the grant GA~\v{C}R 26-22734S from the Czech Science Foundation.

\textbf{Statement on the use of AI.} Claude Opus 5.5 was used for light language editing.

\section{Preliminaries}\label{sec:prel}

In this section, we gather some basic results about K-injective complexes.
They are not really new, and probably known in some form, 
but as it is hard to locate exact references for them, 
we include full proofs.

We learned the next result from Jindřich Novák.
We thank him for allowing us to include it here.

\begin{prop}\label{prop:homAcyc}
Let $A$ be a ring, and let $I$ be an acyclic complex of injective left $A$-modules.
Then for any left $A$-module $M$,
any $n\in \mathbb{Z}$, and any $i\ge 1$,
there is a natural isomorphism
\[
\mrm{H}^{n+i}\left(\opn{Hom}_A(M,I)\right) = \opn{Ext}^i_A(M,\ker(d^n_I)).
\]
\end{prop}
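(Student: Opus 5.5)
The plan is to realize the truncation of $I$ at degree $n$ as an injective resolution of $Z^n := \ker(d^n_I)$, and then read off the Ext groups from it. Since $I$ is acyclic, for every $n$ the sequence
\[
0 \to Z^n \to I^n \xrightarrow{d^n_I} I^{n+1} \xrightarrow{d^{n+1}_I} I^{n+2} \to \cdots
\]
is exact. Exactness at $I^n$ is the definition of $Z^n$. Exactness at $I^{n+k}$ for $k\ge 1$ holds because $\ker(d^{n+k}_I) = \opn{im}(d^{n+k-1}_I)$ by acyclicity. All the $I^j$ are injective, so the complex $E := (I^n \to I^{n+1} \to \cdots)$, with $I^n$ placed in degree $0$, is an injective resolution of $Z^n$. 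By definition,
\[
\opn{Ext}^i_A(M,Z^n) = \mrm{H}^i\bigl(\opn{Hom}_A(M,E)\bigr).
\]

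Next I compare $\opn{Hom}_A(M,E)$ with $\opn{Hom}_A(M,I)$. For a module $M$ sitting in degree $0$, the complex $\opn{Hom}_A(M,I)$ has $\opn{Hom}_A(M,I^j)$ in degree $j$, with differential induced by $d_I$. Hence $\opn{Hom}_A(M,E)$ is exactly the brutal truncation $\sigma^{\ge n}\opn{Hom}_A(M,I)$, shifted so that degree $n$ becomes degree $0$. The cohomology of a brutal truncation $\sigma^{\ge n}C$ agrees with that of $C$ in every degree $>n$. The only differential that is removed is $C^{n-1}\to C^n$, which affects only the image in degree $n$. Therefore, for $i\ge 1$,
\[
\mrm{H}^{n+i}\bigl(\opn{Hom}_A(M,I)\bigr) = \mrm{H}^{i}\bigl(\opn{Hom}_A(M,E)\bigr) = \opn{Ext}^i_A(M,Z^n).
\]

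The hypothesis $i\ge 1$ is used exactly here: in degree $i=0$ the truncation gives $\opn{Hom}_A(M,Z^n)$, which in general differs from $\mrm{H}^n(\opn{Hom}_A(M,I))$.

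Naturality needs two checks. In $M$, the identifications above are induced by $\opn{Hom}_A(-,I)$ applied to a map $M\to M'$, and they commute with it. In $I$, a morphism of acyclic complexes of injectives restricts to a map of kernels and to a map of the truncated resolutions lifting it. By the uniqueness of lifts up to homotopy, this computes the induced map on Ext.

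There is no serious obstacle. The only point needing care is the indexing convention for $\opn{Hom}_A(M,I)$ together with the shift bookkeeping, and making sure the degree $i=0$ case is genuinely excluded.
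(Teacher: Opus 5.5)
Your proposal is correct and follows essentially the same route as the paper. Both identify the truncation of $I$ in degrees $\ge n$, shifted by $n$, as an injective resolution of $\ker(d^n_I)$, and then match the cohomology of $\opn{Hom}_A(M,-)$ applied to it with that of $\opn{Hom}_A(M,I)$ in degrees $>n$. The paper obtains this comparison from the long exact sequence of the triangle $I''\to I\to I'\to I''[1]$, using that $\opn{Hom}_A(M,I')$ vanishes in degrees $\ge n$, whereas you read it off directly from the brutal truncation, which is equally valid.
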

\begin{proof}
For any $n \in \mathbb{Z}$,
truncating $I$ give rise to the complexes
\[
I' = \dots \to I^{n-2} \xrightarrow{d^{n-2}_I} I^{n-1}  \to 0 \to \dots
\]
and
\[
I'' = \dots \to 0 \to I^n \xrightarrow{d^n_I} I^{n+1} \to \dots
\]
which fit into a distinguished triangle
\[
I'' \to I \to I' \to I''[1]
\]
in $\cat{K}(A)$.
The fact that $I$ is acyclic implies that for any $n \in \mathbb{Z}$,
the sequence
\[
0 \to \ker(d^n_I) \inj I^n \xrightarrow{d^n_I} I^{n+1} \xrightarrow{d^{n+1}_I} I^{n+2} \to \dots
\]
is exact, so that $I''[n]$ is an injective resolution of $\ker(d^n_I)$.
Applying the functor $\opn{Hom}_A(M,-)$ to the above triangle yields the triangle
\[
\opn{Hom}_A(M,I'') \to \opn{Hom}_A(M,I) \to \opn{Hom}_A(M,I') \to \opn{Hom}_A(M,I'')[1]
\]
This implies that for any $i$, there is an exact sequence
\begin{equation}\label{eqn:long-ex}
\mrm{H}^{i-1}\left(\opn{Hom}_A(M,I')\right) \to \mrm{H}^{i}\left(\opn{Hom}_A(M,I'')\right)
\to \mrm{H}^{i}\left(\opn{Hom}_A(M,I)\right) \to \mrm{H}^{i}\left(\opn{Hom}_A(M,I')\right)
\end{equation}
Note that $\opn{Hom}_A(M,I')$ is concentrated in degrees $\le n-1$
so $\mrm{H}^i\left(\opn{Hom}_A(M,I')\right) = 0$ for all $i \ge n$.
Hence for $i>n$, both $\mrm{H}^i\left(\opn{Hom}_A(M,I')\right)$ and
$\mrm{H}^{i-1}\left(\opn{Hom}_A(M,I')\right)$ vanish.
The exact sequence \eqref{eqn:long-ex} then implies that
for any $i> n$ there is a natural isomorphism
\begin{align*}
\mrm{H}^i\left(\opn{Hom}_A(M,I)\right) \cong \mrm{H}^i\left(\opn{Hom}_A(M,I'')\right) \cong\\
\mrm{H}^{i-n} \left( \opn{Hom}_A(M,I''[n])\right) = 
\opn{Ext}^{i-n}_A(M,\ker(d^n_I)),
\end{align*}
as claimed.
\end{proof}

The following is a well known characterization of contractibility for acyclic complexes of injectives.

\begin{prop}\label{prop:inj-contr}
Let $A$ be a ring, 
and let $I$ be an acyclic complex of injective left $A$-modules.
Then $I$ is contractible if and only if for all $n \in \mathbb{Z}$,
the left $A$-module $\ker(d^n_I)$ is injective.
\end{prop}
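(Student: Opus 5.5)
The plan is to prove the two directions separately. The main tool is the collection of short exact sequences
\[
0 \to Z^n \to I^n \to Z^{n+1} \to 0, \qquad Z^n := \ker(d^n_I),
\]
which we get from acyclicity, because $\opn{im}(d^n_I) = \ker(d^{n+1}_I) = Z^{n+1}$.

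\emph{Contractible implies injective kernels.} Suppose $I$ is contractible, with a homotopy $h$ satisfying $\mathrm{id} = d h + h d$. A standard computation shows that each of the sequences above splits. Explicitly, the map
\[
h^{n+1}|_{Z^{n+1}} \colon Z^{n+1} \to I^n
\]
is a section of $d^n_I \colon I^n \to Z^{n+1}$. Indeed, for $z \in Z^{n+1}$ we have $d^{n+1}z = 0$, so
\[
z = d^n h^{n+1}(z) + h^{n+2} d^{n+1}(z) = d^n h^{n+1}(z).
\]
Hence each $Z^{n+1}$ is a direct summand of the injective module $I^n$, and is therefore injective. This holds for every $n$, so all kernels are injective.

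\emph{Injective kernels imply contractible.} Now assume every $Z^n$ is injective. Then each sequence $0 \to Z^n \to I^n \to Z^{n+1} \to 0$ splits, because its left term is injective. This gives decompositions
\[
I^n \cong Z^n \oplus Z^{n+1},
\]
under which $d^n_I$ becomes the map $(a,b) \mapsto (b,0)$, that is, the projection onto $Z^{n+1}$ followed by the inclusion into $I^{n+1}$. So $I$ is isomorphic to a direct sum, or equivalently a product since the sum is degreewise finite, of the complexes
\[
\cdots \to 0 \to Z^{n+1} \xrightarrow{\ \mathrm{id}\ } Z^{n+1} \to 0 \to \cdots,
\]
each of which is contractible. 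Concretely, the homotopy $h^{n+1} \colon I^{n+1} \to I^n$ is given by $(a,b) \mapsto (0,a)$, and a direct check shows $dh + hd = \mathrm{id}$.

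There is no real obstacle in this argument. The only step needing care is checking that the chosen splittings are compatible with the differentials, i.e.\ that the identification of $d^n_I$ as projection-then-inclusion holds. This follows because $d^n_I$ vanishes on $Z^n$ and maps $I^n$ onto $Z^{n+1} \subseteq I^{n+1}$.
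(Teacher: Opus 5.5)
Your proof is correct and follows the same route as the paper. Both arguments reduce contractibility to the splitting of the sequences $0 \to \ker(d^n_I) \to I^n \to \ker(d^{n+1}_I) \to 0$, then use that a summand of the injective module $I^n$ is injective and that a sequence with injective left term splits; the only difference is that you verify the splitting criterion by hand (via the homotopy and the explicit decomposition $I^n \cong Z^n \oplus Z^{n+1}$), whereas the paper simply cites it from Christensen--Foxby--Holm.
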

\begin{proof}
Recall that by \cite[Corollary 4.3.33]{CFH},
an acyclic complex $I$ is contractible if and only if for any $n \in \mathbb{Z}$ the exact sequence
\[
0 \to \ker(d^n_I) \to I^n \to \ker(d^{n+1}_I) \to 0
\]
splits.
Since $I^n$ is an injective module,
this occurs if and only if each $\ker(d^n_I)$ is injective.
\end{proof}

We finish this section with the next proposition, which says that one can check K-injectivity of a complex of injectives on acyclic complexes of injectives.
This reduces the problem of determining whether a complex of injectives is K-injective to a test with respect to the big singularity category $\cat{K}_{\opn{ac}}(\opn{Inj} A)$.

\begin{prop}\label{prop:kinjifAcInj}
Let $A$ be a ring,
and let $I$ be a complex of injective $A$-modules.
Then $I$ is K-injective over $A$ if and only if for any acyclic complex of injective $A$-modules $J$,
it holds that $\opn{Hom}_A(J,I)$ is acyclic.
\end{prop}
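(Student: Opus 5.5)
One direction is immediate: if $I$ is K-injective, then $\opn{Hom}_A(X,I)$ is acyclic for every acyclic complex $X$. In particular this holds for acyclic complexes of injectives.

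For the converse, suppose $\opn{Hom}_A(J,I)$ is acyclic for every acyclic complex of injectives $J$. Let $X$ be an arbitrary acyclic complex of $A$-modules. The plan is to replace $X$ by an acyclic complex of injectives without changing $\opn{Hom}_A(-,I)$ up to quasi-isomorphism. To do this, choose a K-injective resolution, namely a quasi-isomorphism $X \to J$ where $J$ is a K-injective complex whose terms are injective modules. Such resolutions exist over any ring, by Spaltenstein's construction or by the standard model structure on $\cat{C}(A)$. Since $X$ is acyclic, $J$ is acyclic as well, so $J$ is an acyclic complex of injectives and $\opn{Hom}_A(J,I)$ is acyclic by hypothesis. (In fact $J$ is then contractible, being acyclic and K-injective; but we will not use this, since it is the comparison map that matters.)

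It remains to see that $\opn{Hom}_A(J,I) \to \opn{Hom}_A(X,I)$ is a quasi-isomorphism. Its cone is, up to shift, $\opn{Hom}_A(C,I)$, where $C = \opn{cone}(X\to J)$ is acyclic. So we are back to the original problem for $C$, and this circularity is the main obstacle. I would break it with a better choice of resolution that makes the cone harmless.

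\textbf{First choice: a pure-monic resolution.} Take the map $X \to J$ to be a monomorphism with cokernel $Q$. A short exact sequence of complexes $0\to X\to J\to Q\to 0$ that is degreewise split yields a short exact sequence after applying $\opn{Hom}_A(-,I)$. It is degreewise split here, because each $J^n$ is injective and $X^n \to J^n$ is injective. Hence there is a triangle in $\cat{K}(A)$, and it suffices to show $\opn{Hom}_A(Q,I)$ is acyclic, with $Q$ acyclic. Iterating only moves the problem along, so instead I would use the right orthogonal formulation. Let $\mathcal{L}$ be the class of complexes $Y$ for which $\opn{Hom}_{\cat{K}(A)}(Y,I[n])=0$ for all $n$. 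The class $\mathcal{L}$ is a localizing subcategory of $\cat{K}(A)$. By hypothesis it contains every acyclic complex of injectives, so the goal becomes showing that it contains all acyclic complexes.

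\textbf{Alternative: Krause's recollement.} The cleaner route is Krause's recollement $\cat{K}_{\opn{ac}}(\opn{Inj} A)\to\cat{K}(\opn{Inj} A)\to\cat{D}(A)$. In that setting, $I\in\cat{K}(\opn{Inj}A)$ is K-injective exactly when it lies in the right orthogonal of $\cat{K}_{\opn{ac}}(\opn{Inj}A)$ inside $\cat{K}(\opn{Inj}A)$, which is the image of the right adjoint $\cat{D}(A)\to\cat{K}(\opn{Inj}A)$. Concretely, form the triangle $\mathbf{i}I \to I \to E\to$. Here $\mathbf{i}I$ is a K-injective resolution of $I$ taken within $\cat{K}(\opn{Inj}A)$; it maps to $I$ by K-injectivity of $\mathbf{i}I$, after inverting the quasi-isomorphism $I\to\mathbf{i}I$. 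The third term $E$ is then an acyclic complex of injectives. Applying the hypothesis to $E$, the morphism $I\to E$ vanishes in $\cat{K}(A)$. So the triangle splits, and $E$ becomes a summand of $\mathbf{i}I[1]$, which is K-injective. An acyclic K-injective complex is contractible, because the identity map on it is null-homotopic; hence $E\simeq 0$ and $I\simeq \mathbf{i}I$ is K-injective. I expect the only delicate step to be justifying that the resolution triangle stays inside $\cat{K}(\opn{Inj}A)$. For that I would take the map $I\to\mathbf{i}I$ to be a quasi-isomorphism between complexes of injectives, and let $E$ be (a shift of) its cone.
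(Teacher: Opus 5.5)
\textbf{There is a gap in your recollement argument.} Its ingredients are right, but the triangle $\mathbf{i}I \to I \to E$ is oriented the wrong way, and as written two steps fail.

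First, nothing gives you a morphism $\mathbf{i}I \to I$ in $\cat{K}(A)$ inverting $s\colon I \to \mathbf{i}I$. K-injectivity of $\mathbf{i}I$ makes $\opn{Hom}_{\cat{K}(A)}(-,\mathbf{i}I)\to\opn{Hom}_{\cat{D}(A)}(-,\mathbf{i}I)$ bijective, so it controls maps \emph{into} $\mathbf{i}I$. The standard way to lift $s^{-1}\in\opn{Hom}_{\cat{D}(A)}(\mathbf{i}I,I)$ to $\cat{K}(A)$ is K-injectivity of the target $I$, which is exactly what you are proving.

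Second, the hypothesis says $\opn{Hom}_{\cat{K}(A)}(E,I[n])=\mrm{H}^n(\opn{Hom}_A(E,I))=0$. It kills morphisms from acyclic complexes of injectives \emph{into} $I$, and says nothing about a morphism $I\to E$. So the splitting does not follow. Also, the sentence that K-injectives are exactly the right orthogonal of $\cat{K}_{\opn{ac}}(\opn{Inj}A)$ in $\cat{K}(\opn{Inj}A)$ is a restatement of the proposition, so it cannot be cited. You also do not need Krause's recollement, which in its original form assumes $A$ noetherian.

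\textbf{The repair is what your last sentence points to.} Let $E=\opn{cone}(s)$; it is an acyclic complex of injectives. Use the rotated triangle
\[
E[-1] \xrightarrow{\delta} I \xrightarrow{s} \mathbf{i}I \to E .
\]
By hypothesis $\delta\in\mrm{H}^0(\opn{Hom}_A(E[-1],I))=0$. Hence the triangle splits and $\mathbf{i}I\cong I\oplus E$ in $\cat{K}(A)$. Then $I$, being a direct summand of a K-injective complex, is K-injective. Equivalently, $E$ is acyclic and K-injective, hence contractible, so $s$ is a homotopy equivalence.

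\textbf{With this fix your proof is correct and differs from the paper's.} The paper works on the side of the test complex $X$. It uses the left adjoint $G\colon\cat{K}(A)\to\cat{K}(\opn{Inj}A)$ of the inclusion, due to Krause and Neeman, which is a substantial theorem over an arbitrary ring. It checks by Yoneda that $G(X)$ is acyclic, and reads off $\opn{Hom}_{\cat{K}(A)}(X,I[n])\cong\mrm{H}^n(\opn{Hom}_A(G(X),I))$.

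This is precisely what your first attempt lacked. A K-injective resolution $X\to J$ has the universal property only against K-injective targets; indeed $J$ is contractible here. The unit $X\to G(X)$ has it against all complexes of injectives.

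Your repaired argument resolves $I$ instead of $X$. It needs only Spaltenstein's K-injective resolutions, not the existence of $G$. It is the same triangle-splitting argument the paper itself uses in the proof of \zcref{thm:faithflat}.
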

\begin{proof}
Clearly, if $I$ is K-injective, this condition is satisfied.
Conversely, let $X$ be an arbitrary acyclic complex of $A$-modules.
According to \cite[Example 5]{{Kr2}} or \cite[Theorem 2.13]{Ne} (or \cite[Proposition 3.6]{Kr} if $A$ is left noetherian),
the inclusion functor $F:\cat{K}(\opn{Inj} A) \to \cat{K}(A)$ has a left adjoint
$G:\cat{K}(A) \to \cat{K}(\opn{Inj} A)$.
Since $\cat{K}(\opn{Inj} A)$ is a full subcategory of $\cat{K}(A)$,
it follows that for all $n \in \mathbb{Z}$
\begin{equation}\label{eqn:kinjhom}
\opn{Hom}_{\cat{K}(A)}(X,I[n]) = \opn{Hom}_{\cat{K}(\opn{Inj} A)}(G(X),I[n]) = 
\mrm{H}^n \left(\opn{Hom}_A(G(X),I)\right).
\end{equation}
We claim that $G(X)$ is acyclic.
To see this, let $Q:\cat{K}(A) \to \cat{D}(A)$ be the canonical functor,
let $W \in \cat{D}(A)$,
and let $W \to K$ be a K-injective resolution made of injective $A$-modules.
Then we have
\begin{align*}
\opn{Hom}_{\cat{D}(A)}(Q(G(X)),W) = 
\opn{Hom}_{\cat{K}(A)}(G(X),K) = \\
\opn{Hom}_{\cat{K}(A)}(X,K)
= \opn{Hom}_{\cat{D}(A)}(Q(X),W)
\end{align*}

Hence, by the Yoneda lemma,
we have $Q(G(X)) \cong Q(X)$ in $\cat{D}(A)$,
which shows that $G(X)$ is an acyclic complex of injectives.
Thus, by assumption,
$\mrm{H}^n \left(\opn{Hom}_A(G(X),I)\right) = 0$ for all $n$,
so \eqref{eqn:kinjhom} shows that $I$ is K-injective.
\end{proof}

\section{Faithfully flat extensions}\label{sec:FF}

In this section we study the behavior of K-injectivity with respect to faithfully flat extensions $A \to B$. An interesting feature of the results here is that we do not have to assume that the target ring $B$ is commutative.

The following result, which follows from the work of Christensen and Iyengar, will be crucial in what follows.

\begin{prop}\label{prop:CI}
Let $A$ be a commutative noetherian ring.
Let $B$ be a (not necessarily commutative) central $A$-algebra which is faithfully flat over $A$.
Given an $A$-module $J$,
it holds that $J$ is injective over $A$ if and only if
$\opn{Ext}^i_A(B,J) = 0$ for all $i\ge 1$ and $\opn{Hom}_A(B,J)$ is injective over $B$ as a left $B$-module.
\end{prop}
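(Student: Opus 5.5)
The forward direction is formal, and for the converse I plan to reduce injectivity of $J$ to a Baer-type Ext criterion, moving between $A$ and $B$ by tensor--Hom adjunction. The one step I cannot close in the plan is the descent step in the third paragraph, where I expect to rely on Christensen--Iyengar.

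\textbf{Forward direction.} Assume $J$ is injective over $A$. Then $\opn{Ext}^i_A(B,J)=0$ for $i\ge 1$ trivially. Since $B$ is a central $A$-algebra, $\opn{Hom}_A(B,J)$ is a left $B$-module via right multiplication on $B$. For any left $B$-module $M$, adjunction gives
\[
\opn{Hom}_B\left(M,\opn{Hom}_A(B,J)\right)\cong \opn{Hom}_A(M,J).
\]
The right-hand side is exact in $M$, so $\opn{Hom}_A(B,J)$ is injective over $B$.

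\textbf{Converse: setup.} Put $E=\opn{Hom}_A(B,J)$. The vanishing of $\opn{Ext}^{\ge 1}_A(B,J)$ means $\mrm{R}\opn{Hom}_A(B,J)\simeq E$ in $\cat{D}(B)$. Since $A$ is noetherian, it suffices by Baer's criterion to show $\opn{Ext}^1_A(M,J)=0$ for every finitely generated $A$-module $M$. Fix such an $M$ and a presentation $0\to K\to F\to M\to 0$ with $F$ finitely generated free. Then $\opn{Ext}^1_A(M,J)=\opn{coker}\left(\opn{Hom}_A(F,J)\to\opn{Hom}_A(K,J)\right)$. Apply $\opn{Hom}_A(B,-)$ and use adjunction:
\[
\opn{Hom}_A\left(B,\opn{Hom}_A(K,J)\right)\cong\opn{Hom}_A(B\otimes_A K,J)\cong \opn{Hom}_B(B\otimes_A K,E),
\]
and similarly for $F$. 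Because $B$ is flat over $A$, the map $B\otimes_A K\to B\otimes_A F$ stays injective. Because $E$ is $B$-injective, $\opn{Hom}_B(B\otimes_A F,E)\to\opn{Hom}_B(B\otimes_A K,E)$ is surjective. Equivalently, at the derived level,
\[
\mrm{R}\opn{Hom}_A\left(B,\mrm{R}\opn{Hom}_A(M,J)\right)\cong \mrm{R}\opn{Hom}_B(B\otimes_A M,E)\cong \opn{Hom}_B(B\otimes_A M,E)
\]
is concentrated in degree $0$.

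\textbf{Descent step (main obstacle).} It remains to pass from this to $\opn{Ext}^{\ge1}_A(M,J)=0$, i.e.\ to show that $\mrm{R}\opn{Hom}_A(B,-)$ detects the vanishing of positive cohomology of $X=\mrm{R}\opn{Hom}_A(M,J)$. This is genuinely nontrivial, since $\opn{Hom}_A(B,N)$ can vanish for nonzero $N$. My first attempt would be a descending argument on the cohomology of $X$. It uses three facts:
\begin{enumerate}
\item faithful flatness makes $A\to B$ pure, so the cokernel $B/A$ is flat;
\item $\opn{Ext}_A(B,-)$ and $\opn{Ext}_A(B/A,-)$ vanish in positive degrees on the relevant modules, because these modules are built from $J$ by finitely generated operations and $\opn{Ext}^{\ge1}_A(B,J)=0$;
\item the sequence $0\to\opn{Hom}_A(B/A,N)\to\opn{Hom}_A(B,N)\to N\to\opn{Ext}^1_A(B/A,N)$ then shows that $N$ is a quotient of $\opn{Hom}_A(B,N)$.
\end{enumerate}
Making this bookkeeping precise is exactly the content of Christensen--Iyengar's descent result for injectivity along faithfully flat maps. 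If the direct argument becomes unwieldy, I would instead invoke their theorem, after checking that their proof uses only flatness and centrality of $B$, not commutativity.
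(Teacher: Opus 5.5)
Your forward direction and the adjunction computations are correct, but the converse has a genuine gap. The descent step you leave open is the whole content of the proposition, and the sketch you offer for it does not work.

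Fact (2) is unjustified and in effect assumes the conclusion. It includes $\opn{Ext}^1_A(B/A,J)=0$. Since $\opn{Ext}^1_A(B,J)=0$, the sequence $0\to A\to B\to B/A\to 0$ gives $\opn{Ext}^1_A(B/A,J)\cong\opn{coker}\bigl(\opn{Hom}_A(B,J)\to J\bigr)$. So you would be assuming that evaluation $\opn{Hom}_A(B,J)\to J$ is surjective. That is a consequence of injectivity of $J$, and it can fail badly in general: for a non-complete local domain $A$ of positive dimension, $\opn{Hom}_A(\widehat{A},A)=0$. Nor is there any reason for $\opn{Ext}^{\ge 1}_A(B,-)$ to vanish on the cohomology modules $\opn{Ext}^i_A(M,J)$.

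The underlying problem is your choice of test modules. For finitely generated $M$, in general $M$ is not a retract of $B\otimes_A M$. Already $A$ is not an $A$-direct summand of $\widehat{A}$, by the same vanishing. So information about $B\otimes_A M$ does not descend to $M$.

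The missing observation is that your computation has already proved something cleaner. The statement that
$\mrm{R}\opn{Hom}_A(M,E)\cong\mrm{R}\opn{Hom}_A\bigl(B,\mrm{R}\opn{Hom}_A(M,J)\bigr)\cong\opn{Hom}_B(B\otimes_A M,E)$
sits in degree $0$ for all finitely generated $M$ says exactly that $E$ is injective over $A$. More directly, $\opn{Hom}_A(-,E)\cong\opn{Hom}_B(B\otimes_A -,E)$ is exact because $B$ is $A$-flat. Hence $\injdim_A \mrm{R}\opn{Hom}_A(B,J)=\injdim_A E=0$. This statement involves $B$ only as a faithfully flat $A$-module, so your worry about whether the commutativity of $B$ is needed disappears. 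The paper then concludes at once from \cite[Corollary 3.1]{ChI}, which gives $\injdim_A J=\injdim_A \mrm{R}\opn{Hom}_A(B,J)$.

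If you want to see the mechanism, the right test modules are the residue fields $k(\mathfrak{p})$. Since $k(\mathfrak{p})\otimes_A B$ is a nonzero $k(\mathfrak{p})$-vector space, $\opn{Ext}^i_A(k(\mathfrak{p}),J)$ is a direct factor of $\opn{Ext}^i_A(k(\mathfrak{p})\otimes_A B,J)\cong\opn{Ext}^i_A(k(\mathfrak{p}),E)=0$ for $i\ge 1$. The deep input from Christensen--Iyengar, which is not mere bookkeeping, is that such vanishing for all primes $\mathfrak{p}$ forces $J$ to be injective.
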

\begin{proof}
If $J$ is injective over $A$ the result clearly holds.
Conversely, the Ext-vanishing assumption implies that $\mrm{R}\opn{Hom}_A(B,J) \cong \opn{Hom}_A(B,J)$,
which is assumed to be an injective left $B$-module, and hence by flatness of $B$ over $A$,
it is also injective over $A$.
This means that 
\[
\injdim_A(\mrm{R}\opn{Hom}_A(B,J)) = \injdim_A(\opn{Hom}_A(B,J)) = 0.
\]
According to \cite[Corollary 3.1]{ChI},
there is an equality
\[
\injdim_A(J) = \injdim_A(\mrm{R}\opn{Hom}_A(B,J)) = 0,
\]
so that $J$ is injective over $A$.
\end{proof}

Using the above, we may now prove the main result of this section under an acyclicity assumption.
\begin{prop}\label{prop:forMain}
Let $A$ be a commutative noetherian ring.
Let $B$ be a (not necessarily commutative) central $A$-algebra which is faithfully flat over $A$.
Let $J$ be an acyclic complex of injective $A$-modules.
Then $J$ is contractible over $A$ if and only if
$\opn{Hom}_A(B,J)$ is contractible over $B$,
as a complex of left $B$-modules.
\end{prop}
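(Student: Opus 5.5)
The plan is to reduce both directions to the kernel criterion of \zcref{prop:inj-contr}, and to use \zcref{prop:homAcyc} together with \zcref{prop:CI} to pass information between $A$ and $B$.

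The forward direction is formal. If $J$ is contractible over $A$, then a contracting homotopy $h$ on $J$ induces the map $\opn{Hom}_A(B,h)$. This is $B$-linear because $B$ acts on $\opn{Hom}_A(B,J)$ through right multiplication on the source, and it contracts $\opn{Hom}_A(B,J)$. Note also that $\opn{Hom}_A(B,J)$ is a complex of injective left $B$-modules, since each $J^n$ is $A$-injective and $\opn{Hom}_A(B,-)$ is right adjoint to restriction, which is exact.

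For the converse, assume $\opn{Hom}_A(B,J)$ is contractible over $B$, and set $Z^n = \ker(d^n_J)$. By \zcref{prop:inj-contr}, it suffices to show that each $Z^n$ is injective over $A$. We verify the two hypotheses of \zcref{prop:CI}.

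\emph{Ext vanishing.} Apply \zcref{prop:homAcyc} over $A$ with $M = B$ (viewed as an $A$-module). This gives
\[
\opn{Ext}^i_A(B,Z^n) \cong \mrm{H}^{n+i}\left(\opn{Hom}_A(B,J)\right)
\]
for all $i \ge 1$. The right-hand side vanishes because $\opn{Hom}_A(B,J)$ is contractible, hence acyclic.

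\emph{Injectivity of $\opn{Hom}_A(B,Z^n)$ over $B$.} Since $\opn{Hom}_A(B,-)$ is left exact, we have $\ker\left(d^n_{\opn{Hom}_A(B,J)}\right) = \opn{Hom}_A(B,Z^n)$. The complex $\opn{Hom}_A(B,J)$ is an acyclic (indeed contractible) complex of injective left $B$-modules. Applying \zcref{prop:inj-contr} over the ring $B$, its kernels are injective left $B$-modules. Hence each $\opn{Hom}_A(B,Z^n)$ is injective over $B$.

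Now \zcref{prop:CI} yields that $Z^n$ is injective over $A$ for every $n$. By \zcref{prop:inj-contr} over $A$, the complex $J$ is contractible.

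The main subtlety, though a mild one, is bookkeeping the module structures. One must check that the kernel of the differential of $\opn{Hom}_A(B,J)$, computed as left $B$-modules, agrees with $\opn{Hom}_A(B,Z^n)$ with its left $B$-structure. One must also check that \zcref{prop:homAcyc} is applied over $A$, where $B$ is only an $A$-module, so no finiteness is needed. The genuinely deep input is hidden in \zcref{prop:CI}, namely the Christensen–Iyengar equality of injective dimensions. Once that is available, the argument above is just a short chase.
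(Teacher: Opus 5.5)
Your proposal is correct and follows the paper's proof: Ext vanishing from \zcref{prop:homAcyc}, injectivity of the kernels of $\opn{Hom}_A(B,J)$ from \zcref{prop:inj-contr} over $B$, and then \zcref{prop:CI} followed by \zcref{prop:inj-contr} over $A$. The only difference is that you identify $\ker(d^n_{\opn{Hom}_A(B,J)})$ with $\opn{Hom}_A(B,\ker(d^n_J))$ directly by left exactness, whereas the paper reaches this equality through the surjectivity of $\opn{Hom}_A(B,J^{n-1}) \to \opn{Hom}_A(B,\ker(d^n_J))$ and an image computation; your shortcut is valid and simpler.
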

\begin{proof}
As applying an additive functor to a contractible complex yields a contractible complex,
it is clear that if $J$ is contractible over $A$,
then $\opn{Hom}_A(B,J)$ is contractible over $B$.
Conversely, assume that $\opn{Hom}_A(B,J)$ is contractible over $B$.
By \zcref{prop:inj-contr},
we know that for each $n$,
the $B$-module $\ker(d^n_{\opn{Hom}_A(B,J)})$ is injective.
Since $\opn{Hom}_A(B,J)$ is assumed to be contractible over $B$,
it is also acyclic.
Applying \zcref{prop:homAcyc}, for any $i\ge 1$ and $n$ arbitrary shows that
for all $n \in \mathbb{Z}$
\begin{equation}\label{eqn:extvan}
\opn{Ext}^i_A(B,\ker(d^n_J)) = \mrm{H}^{n+i}\left(\opn{Hom}_A(B,J)\right) = 0.
\end{equation}
The fact that $J$ is an acyclic complex implies that for any $n \in \mathbb{Z}$,
there is a short exact sequence of $A$-modules
\[
0 \to \ker(d^{n-1}_J) \to J^{n-1} \to \ker(d^n_J) \to 0
\]
Applying the left exact functor $\opn{Hom}_A(B,-)$ and using \zcref{eqn:extvan} gives
the short exact sequence of $B$-modules
\[
0 \to \opn{Hom}_A(B,\ker(d^{n-1}_J)) \to \opn{Hom}_A(B,J^{n-1}) \to \opn{Hom}_A(B,\ker(d^n_J)) \to 0.
\]
Thus, the map $\opn{Hom}_A(B,J^{n-1}) \to \opn{Hom}_A(B,\ker(d^n_J))$
which is obtained by applying $\opn{Hom}_A(B,-)$ to the surjective map $J^{n-1} \to \opn{Im}(d^{n-1}_J) = \ker(d^n_J)$ remains surjective.
Now, factor the map $d^{n-1}_J:J^{n-1} \to J^n$ as
$J^{n-1} \xrightarrow{\psi} \opn{Im}(d^{n-1}_J) \xrightarrow{\phi} J^n$
where $\psi = d^{n-1}_J$ (with restricting codomain) is the surjection, and $\phi$ is the inclusion. 
We have the following chain of equalities of left $B$-modules:
\begin{align*}
\ker(d^n_{\opn{Hom}_A(B,J)}) = \opn{Im}\left(\opn{Hom}_A(B, d^{n-1}_J)\right) = \\
\opn{Im}\left(\opn{Hom}_A(B, \phi)\right) = 
\opn{Hom}_A(B,\opn{Im}(\phi)) = \opn{Hom}_A(B,\ker(d^n_J)).
\end{align*}

Here, the first equality is the fact that $\opn{Hom}_A(B,J)$ is acyclic,
the second equality follows from surjectivity of $\opn{Hom}_A(B,\psi)$,
and the third equality is because
$\phi$ is injective and $\opn{Hom}_A(B,-)$ is left exact.
Since we know that $\ker(d^n_{\opn{Hom}_A(B,J)})$ is an injective left $B$-module,
this shows that the left $B$-module $\opn{Hom}_A(B,\ker(d^n_J))$ is also injective.
Then, \eqref{eqn:extvan} and \zcref{prop:CI} imply that $\ker(d^n_J)$ is injective over $A$, which by \zcref{prop:inj-contr} implies that $J$ is contractible over $A$.
\end{proof}

In order to prove the main result of this section,
we also need to recall the following fact:
\begin{prop}\label{prop:vanish}
Let $A$ be a ring,
let $F$ be a flat $A$-module,
and let $I$ be an acyclic complex of injective $A$-modules.
Then $\opn{Hom}_A(F,I)$ is acyclic.
\end{prop}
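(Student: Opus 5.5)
The plan is to reduce the statement to \zcref{prop:homAcyc} and then use the vanishing of higher Ext from a flat module into a cotorsion module. Fix $n \in \mathbb{Z}$ and write $Z^n = \ker(d^n_I)$. Since $I$ is acyclic, \zcref{prop:homAcyc} gives, for every $i \ge 1$ and every $n$,
\[
\mrm{H}^{n+i}\left(\opn{Hom}_A(F,I)\right) \cong \opn{Ext}^i_A(F,Z^n).
\]
Every integer $m$ can be written as $n+1$ with $n = m-1$. So it suffices to show that $\opn{Ext}^1_A(F,Z^n)=0$ for all $n$, that is, that each $Z^n$ is cotorsion. Cotorsion means $\opn{Ext}^1_A(G,Z^n)=0$ for every flat $G$.

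The key step is therefore:
\begin{quote}
\emph{Claim.} For every $n$, the syzygy $Z^n$ of an acyclic complex of injectives is cotorsion.
\end{quote}
The truncation $0 \to Z^n \to I^n \to I^{n+1} \to \cdots$ is an injective resolution of $Z^n$. Hence
\[
\opn{Ext}^1_A(G,Z^n) \cong \opn{Ext}^{k}_A(G,Z^{n-k+1}) \quad \text{for every } k \ge 1,
\]
by dimension shifting along the short exact sequences $0\to Z^{j}\to I^{j}\to Z^{j+1}\to 0$ with $I^j$ injective. These shifts only use the surjectivity $I^j \to Z^{j+1}$, which holds by acyclicity.

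The problem is that shifting to the left does not terminate, so this alone gives no vanishing. I would first try to invoke the standard fact that, over any ring, the class of cotorsion modules is closed under cokernels of monomorphisms between cotorsion modules. This holds because the flat modules are closed under kernels of epimorphisms, so $\opn{Ext}^2_A(G,C)=0$ for flat $G$ and cotorsion $C$. That fact still does not start the induction, because no $Z^j$ is known to be cotorsion a priori.

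The real input should be the theorem of Bazzoni--Cort\'es-Izurdiaga--Estrada (``Periodic modules and acyclic complexes''): every acyclic complex of cotorsion modules has cotorsion cycles. Injective modules are cotorsion, so this gives the Claim directly, and I would cite it. Equivalently, one can use the Hom-form of that theorem: $\opn{Hom}_A(F,C)$ is acyclic for any complex $F$ of flats and acyclic complex $C$ of cotorsion modules, whenever either $F$ is K-flat or $C$ has cotorsion cycles. Applied to $F$ placed in degree $0$, which is trivially K-flat, this yields the proposition immediately.

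The main obstacle is exactly this cotorsion-cycles statement, which is a genuinely nontrivial result about periodic flat/cotorsion modules. If I wanted to avoid quoting it, I would reduce to a pure-periodicity argument. Take a flat cover-type presentation $0\to K\to P\to F\to 0$ with $P$ projective, note that the pure-injectivity-style obstructions are periodic, and use the Benson--Goodearl/Neeman result that flat Mittag-Leffler-periodic modules are projective. I expect this route to be much harder than citing the theorem.
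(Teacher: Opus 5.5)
Your main argument is correct, but it is organized differently from the paper's. The paper gives no argument of its own here. It quotes the statement as a special case of \cite[Proposition 2.8]{Verasdanis}, points to \cite[Proposition 3.1]{Emmanouil} and \cite{Stovicek} for more general results, and only remarks that \zcref{prop:homAcyc} yields a direct proof when every flat module has finite projective dimension. You instead use \zcref{prop:homAcyc} in full generality, with $i=1$, to convert the statement into $\opn{Ext}^1_A(F,\ker(d^n_I))=0$ for all $n$. You then quote the Bazzoni--Cort\'es-Izurdiaga--Estrada theorem that acyclic complexes of cotorsion modules have cotorsion cycles. Since \zcref{prop:homAcyc} is an isomorphism, your reduction shows that the proposition, for all flat $F$ at once, is \emph{equivalent} to the cycles of $I$ being cotorsion. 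So the theorem you cite, specialized to injective terms, is equivalent to the proposition, and neither route is more elementary at its core. What your version buys is transparency about where the difficulty lies. It also explains the paper's remark: your left shift $\opn{Ext}^1_A(F,Z^n)\cong\opn{Ext}^{k}_A(F,Z^{n-k+1})$ does terminate once $k>\projdim_A F$, and that is exactly the finite-projective-dimension argument.

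Two side claims need correcting. First, the ``Hom-form'' you state is false in its second branch: cotorsion cycles on $C$ do not suffice without a hypothesis on $F$. For a field $k$ and $A=k[x]/(x^2)$, consider the complex $C=(\cdots\xrightarrow{x}A\xrightarrow{x}A\xrightarrow{x}\cdots)$.
\begin{itemize}
\item It consists of copies of the self-injective ring $A$, so it is a complex of flat and of injective modules.
\item It is acyclic with cycles $xA\cong k$. These are cotorsion, since flat modules over an artinian ring are projective.
\item Since $k$ is not injective, $C$ is not contractible by \zcref{prop:inj-contr}. Hence the identity is not null-homotopic and $\opn{Hom}_A(C,C)$ is not acyclic.
\end{itemize}
The correct version needs $F$ K-flat \emph{and} $C$ acyclic with cotorsion cycles. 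The instance you actually use, $F$ a module in degree $0$, is unaffected.

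Second, your last paragraph is too vague to count as an alternative, and it would not avoid the citation. The Benson--Goodearl/Neeman theorem concerns flat modules that are periodic with respect to projectives. The periodicity that arises here is on the cotorsion side: $M=\prod_n Z^n$ sits in $0\to M\to\prod_n I^n\to M\to 0$ with injective middle term. Moreover, $M$ is cotorsion if and only if every $Z^n$ is, so proving that is just your Claim again.
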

\begin{proof}
This is a particular case of \cite[Proposition 2.8]{Verasdanis}.
See also \cite[Proposition 3.1]{Emmanouil} and \cite{Stovicek} for more general results.
\end{proof}
We remark that under the assumption that every flat module has finite projective dimension, which frequently holds in applications 
(for instance, for commutative noetherian rings of finite Krull dimension),
one may also deduce \zcref{prop:vanish} directly from \zcref{prop:homAcyc}. 

Here is the main result of this section,
which is \zcref{cthm:ff} from the introduction.

\begin{thm}\label{thm:faithflat}
Let $A$ be a commutative noetherian ring.
Let $B$ be a (not necessarily commutative) central $A$-algebra which is faithfully flat over $A$.
Let $J$ be a complex of injective $A$-modules.
Then $J$ is K-injective over $A$ if and only if
$\opn{Hom}_A(B,J)$ is K-injective over $B$ as a complex of left $B$-modules.
\end{thm}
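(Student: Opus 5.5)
The forward direction is formal. If $J$ is K-injective over $A$ and $X$ is an acyclic complex of left $B$-modules, then Hom-tensor adjunction gives
\[
\opn{Hom}_B(X,\opn{Hom}_A(B,J)) \cong \opn{Hom}_A(X,J).
\]
Here $X$ is regarded as a complex of $A$-modules via the central map $A \to B$, and it remains acyclic. Hence the right-hand side is acyclic, so $\opn{Hom}_A(B,J)$ is K-injective over $B$. Its terms are injective left $B$-modules, since $\opn{Hom}_A(B,-)$ is right adjoint to the exact restriction functor.

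For the converse, assume $\opn{Hom}_A(B,J)$ is K-injective over $B$. By \zcref{prop:kinjifAcInj}, it suffices to show that $\opn{Hom}_A(I,J)$ is acyclic for every acyclic complex $I$ of injective $A$-modules. Equivalently, every chain map $f\colon I \to J[n]$ must be null-homotopic. I will reduce this to a contractibility statement, so that \zcref{prop:forMain} applies. Take the cone $C = \opn{cone}(f)[-1]$, or better, consider the triangle $J[n-1] \to C \to I \to J[n]$ in $\cat{K}(A)$. The map $f$ is null-homotopic iff the triangle splits, but this is awkward to use directly.

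A cleaner route is to exploit the K-injective hull. Since $A$ is noetherian, $\cat{K}(\opn{Inj} A)$ has a recollement with $\cat{K}_{\opn{ac}}(\opn{Inj} A)$ and $\cat{D}(A)$ (Krause). So $J$ fits into a triangle
\[
\mathbf{i}J \to J \to J_{\mathrm{ac}} \to \mathbf{i}J[1]
\]
in $\cat{K}(\opn{Inj} A)$. Here $\mathbf{i}J$ is a K-injective resolution of $J$ made of injectives, and $J_{\mathrm{ac}}$ is an acyclic complex of injectives. In fact the map $J \to J_{\mathrm{ac}}$ is the cone of the quasi-isomorphism $\mathbf{i}J \to J$ (with $\mathbf{i}J \to J$ a homotopy K-injective resolution). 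Concretely, choose a quasi-isomorphism $J \to K$ with $K$ K-injective of injectives, and let $L$ be the cone (shifted) of this map. Then $L$ is an acyclic complex of injective $A$-modules, and $J$ is K-injective iff $J \to K$ is a homotopy equivalence iff $L$ is contractible.

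Now apply $\opn{Hom}_A(B,-)$ to the triangle $L[-1] \to J \to K \to L$. The complex $\opn{Hom}_A(B,K)$ is K-injective over $B$ by the forward direction, and $\opn{Hom}_A(B,J)$ is K-injective by hypothesis. Hence $\opn{Hom}_A(B,L)$ is K-injective over $B$, being a cone of K-injectives. On the other hand, $\opn{Hom}_A(B,L)$ is acyclic by \zcref{prop:vanish}, since $B$ is flat over $A$ and $L$ is an acyclic complex of injectives. A complex that is both acyclic and K-injective is contractible, because its identity map is zero in cohomology of its own Hom complex. So $\opn{Hom}_A(B,L)$ is contractible over $B$. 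By \zcref{prop:forMain}, $L$ is contractible over $A$, so $J \simeq K$ in $\cat{K}(A)$, and $J$ is K-injective.

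The main point to verify is the existence of the quasi-isomorphism $J \to K$ whose cone is a complex of injectives. This is just the standard existence of K-injective resolutions by complexes of injective modules (Spaltenstein), with the cone of a map between complexes of injectives again termwise injective and acyclic because the map is a quasi-isomorphism. The rest uses only \zcref{prop:vanish} and \zcref{prop:forMain}. So the real content sits in \zcref{prop:forMain}, and I expect no further obstacle.
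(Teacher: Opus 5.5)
Your proposal is correct and follows essentially the same route as the paper: take a K-injective resolution $J \to K$ by injectives, note that its cone $L$ is an acyclic complex of injectives with $\opn{Hom}_A(B,L)$ both acyclic (by \zcref{prop:vanish}) and K-injective over $B$, hence contractible, and then descend contractibility to $L$ via \zcref{prop:forMain}. The paper phrases the middle step instead as $\opn{Hom}_A(B,\varphi)$ being a quasi-isomorphism between K-injectives, which is equivalent; your recollement paragraph is an unneeded detour (its triangle has the resolution arrow reversed, since the K-injective resolution is $J \to \mathbf{i}J$), but your concrete argument never uses it.
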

\begin{proof}
If $J$ is K-injective over $A$,
then $\opn{Hom}_A(B,J)$ is K-injective over $B$ by \cite[Lemma 13.1.20]{YekBook}.
For the converse, suppose that $\opn{Hom}_A(B,J)$ is K-injective over $B$ as a complex of left $B$-modules.
Choose a K-injective resolution $\varphi:J \to I$,
where $I$ is a K-injective complex of injective $A$-modules.
Such a quasi-isomorphism $\varphi$ exists by \cite[Proposition 3.11]{Spal}.
Considered as a map in $\cat{K}(\opn{Inj}(A))$,
let us complete $\varphi$ to a distinguished triangle
\begin{equation}\label{eqn:distInA}
T \to J \xrightarrow{\varphi} I \to T[1]
\end{equation}
in $\cat{K}(\opn{Inj} A)$.
The fact that $\varphi$ is a quasi-isomorphism implies that the complex of injective $A$-modules $T$ is acyclic.
Applying the triangulated functor $\opn{Hom}_A(B,-):\cat{K}(\opn{Inj} A) \to \cat{K}(\opn{Inj} B)$, we obtain a distinguished triangle
\begin{equation}\label{eqn:distInB}
\opn{Hom}_A(B,T) \to \opn{Hom}_A(B,J) \xrightarrow{\opn{Hom}_A(B,\varphi)} \opn{Hom}_A(B,I) \to \opn{Hom}_A(B,T)[1]
\end{equation}
in $\cat{K}(\opn{Inj} B)$.
Since $B$ is flat over $A$, and $T$ is an acyclic complex of injective $A$-modules, by \zcref{prop:vanish}, the complex $\opn{Hom}_A(B,T)$ is also acyclic.
Hence, the triangle \eqref{eqn:distInB} implies that $\opn{Hom}_A(B,\varphi)$ is a quasi-isomorphism.
The fact that $I$ is K-injective over $A$ implies by \cite[Lemma 13.1.20]{YekBook} that $\opn{Hom}_A(B,I)$ is K-injective over $B$.
By assumption, $\opn{Hom}_A(B,J)$ is also K-injective over $B$.
As a quasi-isomorphism between K-injective complexes of injectives is a homotopy equivalence (see \cite[Corollary 5.3.24]{CFH}), 
the triangle \eqref{eqn:distInB} implies that $\opn{Hom}_A(B,T)$ is contractible over $B$.
It follows from \zcref{prop:forMain} that $T$ is contractible over $A$.
The triangle \eqref{eqn:distInA} shows that $\varphi$ is a homotopy equivalence. 
Since being K-injective is preserved by homotopy equivalences, 
we deduce that $J$ is K-injective over $A$.
\end{proof}

As the next example shows, 
the assumption in the above theorem that $J$ is a complex of injective $A$-modules is necessary. We learned it from \cite{CK}.

\begin{exa}\label{ex:notforInj}
Let $(A,\m)$ be a commutative noetherian local ring,
and assume that $\dim(A) > 0$,
that $A$ is not $\m$-adically complete,
and that $A$ is an integral domain.
Let $\widehat{A}$ be the $\m$-adic completion of $A$.
Then the completion map $A \to \widehat{A}$ is faithfully flat,
and not an isomorphism.
According to \cite[Lemma 3.3]{AEL},
it holds that $\opn{Hom}_A(\widehat{A},A) = 0$,
so that $\opn{Hom}_A(\widehat{A},A)$ is an injective $\widehat{A}$-module.
However, the assumption that $\dim(A) > 0$ implies that $A$ is not injective as an $A$-module.
\end{exa}

Applying \zcref{thm:faithflat} to the opposite ring $B^{\opn{op}}$,
we obtain:
\begin{cor}
Let $A$ be a commutative noetherian ring.
Let $B$ be a (not necessarily commutative) central $A$-algebra which is faithfully flat over $A$.
Let $J$ be a complex of injective $A$-modules.
Then $\opn{Hom}_A(B,J)$ is K-injective over $B$ as a complex of left $B$-modules
if and only if $\opn{Hom}_A(B,J)$ is K-injective over $B$ as a complex of right $B$-modules.
\end{cor}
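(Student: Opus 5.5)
The plan is to pass through $J$ itself: both conditions in the corollary will be shown equivalent to K-injectivity of $J$ over $A$. By \zcref{thm:faithflat}, $J$ is K-injective over $A$ if and only if $\opn{Hom}_A(B,J)$ is K-injective as a complex of left $B$-modules. So it remains to prove that $J$ is K-injective over $A$ if and only if $\opn{Hom}_A(B,J)$ is K-injective as a complex of right $B$-modules.

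For this I would apply \zcref{thm:faithflat} to the opposite ring $B^{\opn{op}}$, after checking its hypotheses and identifying the relevant module structure.

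\textbf{Hypotheses for $B^{\opn{op}}$.} Since $B$ is a central $A$-algebra, the image of $A \to B$ lies in the center of $B$. The underlying $A$-module of $B^{\opn{op}}$ is that of $B$, so it is still faithfully flat over $A$. Moreover, the map $A \to B^{\opn{op}}$ is a ring homomorphism, because $A$ is commutative and lands in the center. Hence $B^{\opn{op}}$ is again a central $A$-algebra, faithfully flat over $A$.

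\textbf{Identifying the module structures.} The complex $\opn{Hom}_A(B,J)$ has a left $B$-structure given by $(b\cdot f)(x) = f(xb)$. It also has a right $B$-structure given by $(f\cdot b)(x) = f(bx)$; this is the natural structure coming from the left $B$-module structure of $B$ on itself. Both are $A$-linear and compatible, because $A$ is central in $B$. Now $\opn{Hom}_A(B^{\opn{op}},J)$, as a left $B^{\opn{op}}$-module via precomposition with right multiplication in $B^{\opn{op}}$, is exactly this right $B$-module structure. Right $B$-modules are the same as left $B^{\opn{op}}$-modules, so the two categories coincide. They therefore have the same acyclic complexes and the same Hom complexes, and hence the same K-injective complexes.

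\textbf{Conclusion.} Applying \zcref{thm:faithflat} to $B^{\opn{op}}$, $J$ is K-injective over $A$ if and only if $\opn{Hom}_A(B,J)$ is K-injective as a complex of right $B$-modules. Combining this with the first reduction gives the corollary.

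The only point needing care is the bookkeeping in the second step. One must confirm that the right $B$-structure intended in the statement is the one induced by left multiplication on $B$, and that centrality makes $B^{\opn{op}}$ a legitimate central $A$-algebra. Everything else is a formal application of the theorem.
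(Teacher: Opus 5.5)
Your proposal is correct and matches the paper's proof: both pass through K-injectivity of $J$ over $A$, apply \zcref{thm:faithflat} once to $B$ and once to $B^{\opn{op}}$, and identify left $B^{\opn{op}}$-modules with right $B$-modules. Your extra checks are accurate and spell out what the paper leaves implicit: that $B^{\opn{op}}$ is again a faithfully flat central $A$-algebra, and that the left $B^{\opn{op}}$-structure on $\opn{Hom}_A(B^{\opn{op}},J)$ is the right $B$-structure $(f\cdot b)(x)=f(bx)$.
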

\begin{proof}
Applying \zcref{thm:faithflat} several times,
we know that $\opn{Hom}_A(B,J)$ is K-injective over $B$ as a complex of left $B$-modules if and only if $J$ is K-injective over $A$
if and only if $\opn{Hom}_A(B^{\opn{op}},J)$ is K-injective over $B^{\opn{op}}$ as a complex of left $B^{\opn{op}}$-modules
if and only if 
$\opn{Hom}_A(B,J)$ is K-injective over $B$ as a complex of right $B$-modules.
\end{proof}

We shall now apply \zcref{thm:faithflat} in commutative algebra.

\begin{cor}\label{cor:radComp}
Let $A$ be a commutative noetherian ring,
let $I \subseteq A$ be an ideal, and suppose that $I \subseteq \opn{rad}(A)$. Denote by $\widehat{A}$ the $I$-adic completion of $A$.
Given a complex of injective $A$-modules $J$,
it holds that $J$ is K-injective over $A$ if and only if $\opn{Hom}_A(\widehat{A},J)$ is K-injective over $\widehat{A}$.
\end{cor}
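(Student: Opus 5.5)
This is a direct application of \zcref{thm:faithflat} with $B = \widehat{A}$. We need to verify the hypotheses of that theorem; the conclusion is then exactly the statement of the corollary.

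First, $\widehat{A}$ is a commutative $A$-algebra via the canonical map $A \to \widehat{A}$, so it is in particular a central $A$-algebra. Since $A$ is noetherian, the $I$-adic completion $\widehat{A}$ is flat over $A$; this is a standard fact, e.g.\ Matsumura, Theorem 8.8.

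The remaining hypothesis is faithful flatness, and this is the only point where the assumption $I \subseteq \opn{rad}(A)$ enters. I would use the criterion that a flat map $A \to B$ is faithfully flat if and only if $\m B \neq B$ for every maximal ideal $\m$ of $A$. Since $I \subseteq \opn{rad}(A)$, every maximal ideal $\m$ contains $I$. Hence $\widehat{A}/\m\widehat{A} \cong A/\m \otimes_A \widehat{A}$ is the completion of $A/\m$ with respect to $I$. As $I$ annihilates $A/\m$, this completion is just $A/\m \neq 0$. So $\m\widehat{A} \neq \widehat{A}$. Alternatively, one can cite Matsumura, Theorem 8.14, which states that $\widehat{A}$ is faithfully flat over $A$ exactly when $I$ lies in the Jacobson radical.

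With these hypotheses checked, \zcref{thm:faithflat} gives that $J$ is K-injective over $A$ if and only if $\opn{Hom}_A(\widehat{A},J)$ is K-injective over $\widehat{A}$. No step poses a real obstacle. The only care needed is the faithful-flatness check, where the radical hypothesis is precisely what is required.
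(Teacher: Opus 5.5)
Your proposal is correct and follows the paper's proof: the paper likewise observes that $I \subseteq \opn{rad}(A)$ makes $A \to \widehat{A}$ faithfully flat, citing the Stacks Project (tag 00MC) where you give a direct check or cite Matsumura, and then applies \zcref{thm:faithflat} with $B = \widehat{A}$.
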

\begin{proof}
By \cite[tag 00MC]{Stacks},
the assumption that $I \subseteq \opn{rad}(A)$ implies that $A \to \widehat{A}$ is faithfully flat. 
The result now follows from \zcref{thm:faithflat}.
\end{proof}

\begin{cor}
Let $(A,\m)$ be a commutative noetherian local ring,
and denote by $\widehat{A}$ the $\m$-adic completion of $A$.
Given a complex of injective $A$-modules $J$,
it holds that $J$ is K-injective over $A$ if and only if $\opn{Hom}_A(\widehat{A},J)$ is K-injective over $\widehat{A}$.
\end{cor}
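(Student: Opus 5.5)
This is the special case $I = \m$ of \zcref{cor:radComp}, so the plan is to check that corollary's hypotheses and then apply it. The only hypothesis to verify is that $\m \subseteq \opn{rad}(A)$. Since $(A,\m)$ is local, $\m$ is the unique maximal ideal, so the Jacobson radical $\opn{rad}(A)$, the intersection of all maximal ideals, equals $\m$. In particular $\m \subseteq \opn{rad}(A)$.

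Applying \zcref{cor:radComp} with $I=\m$, we get that $J$ is K-injective over $A$ if and only if $\opn{Hom}_A(\widehat{A},J)$ is K-injective over $\widehat{A}$. Here $\widehat{A}$ is the $\m$-adic completion, which is exactly the statement.

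We could also argue directly from \zcref{thm:faithflat}. The completion map $A \to \widehat{A}$ of a noetherian local ring is faithfully flat: it is flat because $A$ is noetherian, and faithful because $\m\widehat{A} \neq \widehat{A}$. Moreover, $\widehat{A}$ is commutative, so it is a central $A$-algebra.

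There is no real obstacle here; the only point needing care is the identification $\opn{rad}(A)=\m$ in the local case.
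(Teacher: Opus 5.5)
Your proposal is correct and matches the paper's proof: the statement is the special case $I=\m$ of \zcref{cor:radComp}, and the only thing to check is $\m=\opn{rad}(A)$ for a local ring. Your alternative of applying \zcref{thm:faithflat} directly is also valid, and it is exactly how \zcref{cor:radComp} itself is proved.
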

\begin{proof}
This is a special case of \zcref{cor:radComp}.
\end{proof}

Recall that a functor $F:\cat{T} \to \cat{S}$ between two categories is called conservative if for any morphism $f$ in $\cat{T}$, it holds that $f$ is an isomorphism if and only if $F(f)$ is an isomorphism. If $\cat{T}, \cat{S}$ are triangulated categories, and $F$ is a triangulated functor, 
this is equivalent to $F(X) = 0 \iff X = 0$ for all $X \in \opn{Ob}(\cat{T})$.

\begin{cor}
Let $A$ be a commutative noetherian ring,
and 
let $B$ be a (not necessarily commutative) central $A$-algebra which is flat over $A$.
Then the functor $\opn{Hom}_A(B,-)$ restricts to a triangulated functor
\[
\opn{Hom}_A(B,-) : \cat{K}_{\opn{ac}}(\opn{Inj} A) \to \cat{K}_{\opn{ac}}(\opn{Inj} B)
\]
between the big singularity categories of $A$ and $B$.
If moreover, $B$ is faithfully flat over $A$,
then this functor is conservative.
\end{cor}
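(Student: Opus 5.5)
The plan has two parts: first check that $\opn{Hom}_A(B,-)$ lands where claimed and is triangulated, then prove conservativity by reducing it to \zcref{prop:forMain}.

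\textbf{The functor is well defined and triangulated.} Let $I$ be an injective $A$-module. For a left $B$-module $M$, adjunction gives
\[
\opn{Hom}_B(M,\opn{Hom}_A(B,I)) \cong \opn{Hom}_A(M,I).
\]
The right-hand side is exact in $M$, so $\opn{Hom}_A(B,I)$ is an injective left $B$-module. Hence $\opn{Hom}_A(B,-)$ sends complexes of injective $A$-modules to complexes of injective left $B$-modules. It is additive, so it preserves homotopies. It also commutes with shifts and mapping cones, which makes it a triangulated functor $\cat{K}(\opn{Inj} A) \to \cat{K}(\opn{Inj} B)$. Finally, if $T$ is an acyclic complex of injective $A$-modules, then $\opn{Hom}_A(B,T)$ is acyclic by \zcref{prop:vanish}, because $B$ is flat over $A$. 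So the functor restricts to the big singularity categories.

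\textbf{Conservativity.} Now assume $B$ is faithfully flat over $A$. By the remark preceding the statement, for a triangulated functor it suffices to show that $F(X) = 0$ implies $X = 0$ for every object $X$. An object of $\cat{K}_{\opn{ac}}(\opn{Inj} A)$ is zero exactly when it is contractible, and likewise over $B$. So let $T$ be an acyclic complex of injective $A$-modules with $\opn{Hom}_A(B,T)$ contractible over $B$. Then \zcref{prop:forMain} gives that $T$ is contractible over $A$, i.e.\ $T = 0$ in $\cat{K}_{\opn{ac}}(\opn{Inj} A)$. The converse direction ($T=0$ implies $F(T)=0$) is trivial.

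Nothing here looks like a real obstacle, since the substantive work is already contained in \zcref{prop:forMain}. The one point to get right is the reduction from ``reflects isomorphisms'' to ``reflects zero objects''. Given a morphism $f$, complete it to a triangle whose third term is its cone $C(f)$. The cone of a map of acyclic complexes of injectives is again acyclic and injective, so $C(f)$ lies in $\cat{K}_{\opn{ac}}(\opn{Inj} A)$. Since $F$ is triangulated, $F(C(f)) \cong C(F(f))$. Therefore $F(f)$ is an isomorphism if and only if $F(C(f)) = 0$, which by the above happens if and only if $C(f) = 0$, that is, if and only if $f$ is an isomorphism.
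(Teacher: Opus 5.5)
Your proof is correct and follows the paper's argument: acyclicity of $\opn{Hom}_A(B,T)$ comes from \zcref{prop:vanish}, and conservativity comes from \zcref{prop:forMain} together with the fact that a triangulated functor reflects isomorphisms once it reflects zero objects. You simply write out the details the paper leaves implicit, namely injectivity of $\opn{Hom}_A(B,I)$ via adjunction and the mapping-cone reduction.
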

\begin{proof}
Clearly,
the triangulated functor $\opn{Hom}_A(B,-)$ sends $\cat{K}_{\opn{ac}}(\opn{Inj} A)$
to $\cat{K}(\opn{Inj} B)$.
By \zcref{prop:vanish}, if $J$ is an acyclic complex of injective $A$-modules,
the fact that $B$ is flat over $A$ implies that $\opn{Hom}_A(B,J)$ is acyclic.
Finally, assuming $B$ is faithfully flat over $A$,
since $\opn{Hom}_A(B,-)$ is a triangulated functor,
\zcref{prop:forMain} implies that it is conservative.
\end{proof}

The next corollary is a special case of a recent result of Positselski,
\cite[Example 3.9]{Pos},
which allows $A$ to be non-noetherian.
A far stronger result over noetherian rings will be proven in the next section (see \zcref{cor:cover}).
\begin{cor}\label{cor:pos}
Let $A$ be a commutative noetherian ring,
and let $(a_1,\dots,a_n)$ be a finite sequence of elements of $A$ that generate the unit ideal in $A$.
Let $J$ be a complex of injective $A$-modules.
Then $J$ is K-injective over $A$ if and only if $\opn{Hom}_A(A[a_i^{-1}],J)$ is K-injective over $A[a_i^{-1}]$ for all $1\le i \le n$.
\end{cor}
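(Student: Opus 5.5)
The plan is to deduce this from \zcref{thm:faithflat}, applied to the commutative ring $B = \prod_{i=1}^n A[a_i^{-1}]$, which is central over $A$. First I check that $B$ is faithfully flat over $A$. Each $A[a_i^{-1}]$ is flat, and a finite product of flat modules is flat, so $B$ is flat. For faithfulness, let $\m$ be a maximal ideal of $A$. Since $(a_1,\dots,a_n) = A$, some $a_i \notin \m$, so $\m A[a_i^{-1}] \neq A[a_i^{-1}]$ and hence $\m B \neq B$. Thus $A \to B$ is faithfully flat. \zcref{thm:faithflat} then says that $J$ is K-injective over $A$ if and only if $\opn{Hom}_A(B,J)$ is K-injective over $B$.

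Next I identify $\opn{Hom}_A(B,J)$. Because the product is finite, it is also a direct sum, so there is a natural isomorphism of complexes
\[
\opn{Hom}_A(B,J) \cong \prod_{i=1}^n \opn{Hom}_A(A[a_i^{-1}],J).
\]
This is compatible with the $B$-module structure: under the equivalence between $B$-modules and $n$-tuples of $A[a_i^{-1}]$-modules given by the idempotents $e_i$ of $B$, the $i$-th component is $e_i\opn{Hom}_A(B,J) = \opn{Hom}_A(A[a_i^{-1}],J)$.

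It remains to prove that a complex $M = \prod_i M_i$ over $B = \prod_i B_i$ is K-injective if and only if each $M_i$ is K-injective over $B_i$. The category of complexes over $B$ splits as the product of the categories of complexes over the $B_i$, and a complex $X$ over $B$ is acyclic exactly when every $e_iX$ is acyclic. Moreover
\[
\opn{Hom}_B(X,M) \cong \prod_i \opn{Hom}_{B_i}(e_iX, M_i).
\]
For the forward direction, given an acyclic complex $X_i$ over $B_i$, regard it as a $B$-complex via the projection $B \to B_i$; then $\opn{Hom}_{B_i}(X_i,M_i) \cong \opn{Hom}_B(X_i,M)$ is acyclic. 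For the converse, a finite product of acyclic complexes is acyclic. Combining this with the previous paragraph gives the statement.

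I expect no real obstacle. The only points needing care are the faithful flatness of $B$ and the bookkeeping for the product decomposition, both of which are routine.
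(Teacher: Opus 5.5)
Your proposal is correct and follows the paper's proof: both apply \zcref{thm:faithflat} to $B=\prod_{i=1}^n A[a_i^{-1}]$ and use the decomposition $\opn{Hom}_A(B,J)\cong\prod_{i=1}^n\opn{Hom}_A(A[a_i^{-1}],J)$. Your explicit checks of faithful flatness and of the componentwise nature of K-injectivity over a finite product ring fill in details the paper leaves implicit, including the forward direction, which the paper's proof does not spell out.
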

\begin{proof}
The assumption implies that the ring $B = \prod_{i=1}^n A[a_i^{-1}]$ is faithfully flat over $A$.
Moreover, we have an isomorphism of complexes of $B$-modules
\[
\opn{Hom}_A(B,J) \cong \prod_{i=1}^n \opn{Hom}_A( A[a_i^{-1}],J).
\]
The fact that $\opn{Hom}_A(A[a_i^{-1}],J)$ is K-injective over $A[a_i^{-1}]$ for all $1\le i \le n$ and the above isomorphism implies that $\opn{Hom}_A(B,J)$ is K-injective over $B$.
Hence \zcref{thm:faithflat} implies that $J$ is K-injective over $A$.
\end{proof}

\begin{rem}
The assumption that $A$ is noetherian in all of the above results followed from its necessity in \zcref{prop:CI}. 
This in turn was needed because of the use of \cite[Corollary 3.1]{ChI} which requires noetherianity of $A$. 
However, under some additional assumptions,
namely, that every flat $A$-module has finite projective dimension,
and that $B$ has projective dimension at most $1$ over $A$,
\cite[Theorem 4.3]{ChI} shows that the same conclusion holds even without assuming that $A$ is noetherian.
This allows the entire above discussion to be upgraded to this setting.
In particular, the assumption that $\projdim_A(B) \le 1$ holds for $B = \prod_{i=1}^n A[a_i^{-1}]$, 
which means that the above ideas can be used to recover Positselski's result even for a non-noetherian ring $A$, provided that every flat $A$-module has finite projective dimension (which holds, for example, if $A$ has cardinality at most $\aleph_n$ for some $n$).
\end{rem}

Grothendieck proved in \cite[Proposition 17.3.3]{EGA4} that if $(A,\m) \to (B,\n)$ is a faithfully flat map of commutative noetherian local rings 
and if $B$ is regular, then $A$ is regular. This easily generalizes to the non-local case, \cite[tag 07NG]{Stacks}, showing that if $A \to B$ is a faithfully flat map of commutative noetherian rings and $B$ is regular, then $A$ is regular.
The next corollary generalizes this result in two different directions,
allowing $B$ to be noncommutative, left coherent and left regular, rather than commutative noetherian and regular.

To state it, recall that a left coherent ring $B$ is called a left regular ring if any left finitely generated ideal has finite projective dimension.

\begin{cor}\label{cor:regular}
Let $A$ be a commutative noetherian ring.
Let $B$ be a (not necessarily commutative) central $A$-algebra which is faithfully flat over $A$.
Assume that $B$ is left coherent and left regular.
Then $A$ is regular (that is, $\opn{gl}\dim(A_{\p}) < \infty,$ for all $\p \in \opn{Spec}(A)$).
\end{cor}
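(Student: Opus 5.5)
We will deduce regularity of $A$ from the K-injectivity criterion of \zcref{thm:faithflat}, combined with the known characterization of regularity via complexes of injectives: over a commutative noetherian ring $A$, every complex of injective $A$-modules is K-injective if and only if $A$ is regular. Equivalently, $A$ is regular if and only if every acyclic complex of injective $A$-modules is contractible, i.e.\ $\cat{K}_{\opn{ac}}(\opn{Inj} A) = 0$; this is the result of \cite{CET,II} recalled in the introduction. So the plan is to show that every complex of injective $A$-modules $J$ is K-injective over $A$.

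Fix a complex of injective $A$-modules $J$. The complex $\opn{Hom}_A(B,J)$ consists of injective left $B$-modules, since $\opn{Hom}_A(B,-)$ is right adjoint to the exact restriction functor from left $B$-modules to $A$-modules. By \zcref{thm:faithflat}, it therefore suffices to show that over a left coherent left regular ring $B$, every complex of injective left $B$-modules is K-injective. Alternatively, by \zcref{prop:kinjifAcInj}, \zcref{prop:forMain} and \zcref{prop:vanish}, it suffices to show that every acyclic complex of injective left $B$-modules $X$ is contractible. Then for $T$ acyclic over $A$, the complex $\opn{Hom}_A(B,T)$ is acyclic and hence contractible, so $T$ is contractible by \zcref{prop:forMain}.

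The key step, and the main obstacle, is this noncommutative statement for $B$. I would argue directly with \zcref{prop:homAcyc} and \zcref{prop:inj-contr}. Let $X$ be an acyclic complex of injective left $B$-modules. We want each $Z^n := \ker(d^n_X)$ to be injective. Since $B$ is left coherent, a Baer-type criterion should let us test injectivity on finitely presented cyclic modules $B/\mathfrak{a}$ with $\mathfrak{a}$ a finitely generated left ideal. Strictly, Baer's criterion uses all left ideals, so this reduction must be handled with care; one can write an arbitrary left ideal as a filtered union of finitely generated ones and use $\varprojlim^1$-control, or instead use the fact that for left coherent left regular rings the relevant class of modules is closed under the needed limits. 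Granting the reduction, left regularity gives $\projdim_B(B/\mathfrak{a}) = d < \infty$. By \zcref{prop:homAcyc},
\[
\opn{Ext}^1_B(B/\mathfrak{a}, Z^n) \cong \opn{Ext}^{1+d}_B(B/\mathfrak{a}, Z^{n-d}) = 0,
\]
using dimension shifting along the injective coresolution $0 \to Z^{n-d} \to X^{n-d} \to \dots$. Thus each $Z^n$ is injective relative to finitely presented cyclic modules, i.e.\ absolutely pure or FP-injective. The genuine difficulty is upgrading this to injectivity. To do so, I would avoid Baer entirely. Since the $X^n$ are injective and the $Z^n$ are FP-injective, the sequences $0\to Z^{n}\to X^{n}\to Z^{n+1}\to 0$ are pure exact. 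One then hopes to show that an acyclic complex of injectives with FP-injective cycles over such a ring is contractible, via \v{S}\v{t}ov\'i\v{c}ek's results \cite{Stovicek} on pure-acyclic complexes of injectives (cited after \zcref{prop:vanish}).

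Finally, having shown that $\opn{Hom}_A(B,T)$ is contractible for every acyclic complex of injective $A$-modules $T$, \zcref{prop:forMain} gives $\cat{K}_{\opn{ac}}(\opn{Inj} A) = 0$. By \cite{CET,II}, or by localizing at primes and using colocalization to pass to $A_\p$, this yields $\opn{gl}\dim(A_\p) < \infty$ for all $\p \in \opn{Spec}(A)$, as claimed.
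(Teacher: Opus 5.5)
Your outer reduction matches the paper's. By \zcref{thm:faithflat} (or, as you note, via \zcref{prop:vanish}, \zcref{prop:forMain} and \zcref{prop:kinjifAcInj}), everything reduces to one claim: over a left coherent left regular ring $B$, every complex of injective left $B$-modules is K-injective. Equivalently, every acyclic complex of injective left $B$-modules is contractible. Then \cite[Theorem 2.8]{II} gives regularity of $A$.

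The gap is that you never establish this claim about $B$, which you yourself call the main obstacle. Your argument stops at FP-injectivity of the cycles $Z^n$ and then only ``hopes'' to reach contractibility. The paper does not prove this step either; it quotes it from \cite[Theorem 2.1]{CET}.

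Also discard the Baer-type reduction to finitely generated left ideals; it cannot be ``granted''. Over a non-noetherian ring that test characterizes FP-injective modules rather than injective ones. For example, over a non-noetherian von Neumann regular ring every module is FP-injective, but not every module is injective.

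Your idea can, however, be completed without \cite{CET}:
\begin{enumerate}
\item Over a left coherent ring, a finitely presented module $F$ has a finite filtration by finitely presented cyclic modules $B/\mathfrak{a}$ with $\mathfrak{a}$ finitely generated. Left regularity therefore gives $\projdim_B F<\infty$.
\item Your dimension shift along $0\to Z^{n-d}\to X^{n-d}\to\cdots$ then gives $\opn{Ext}^1_B(F,Z^n)=0$ for every finitely presented $F$, so every $Z^n$ is FP-injective.
\item An FP-injective module is pure in every module containing it. Hence each sequence $0\to Z^n\to X^n\to Z^{n+1}\to 0$ is pure exact, i.e.\ $X$ is pure acyclic.
\item Injective modules are pure-injective, so \v{S}\v{t}ov\'{\i}\v{c}ek's theorem that every pure acyclic complex of pure-injective modules is contractible \cite{Stovicek} shows that $X$ is contractible.
\end{enumerate}
With this step supplied, or replaced by the citation of \cite[Theorem 2.1]{CET}, your proof is complete and agrees with the paper's.
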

\begin{proof}
Let $J$ be a complex of injective $A$-modules.
According to \cite[Theorem 2.1]{CET}, the fact that $B$ is left regular implies that any complex of injective left $B$-modules is K-injective.
In particular, $\opn{Hom}_A(B,J)$ is K-injective over $B$.
By \zcref{thm:faithflat},
this implies that $J$ is K-injective over $A$.
We obtain that any complex of injective $A$-modules is K-injective over $A$.
By \cite[Theorem 2.8]{II},
this implies that $A$ is regular.
\end{proof}

In particular, under the additional assumption that $B$ is commutative, we obtain:
\begin{cor}
Let $A \to B$ be a faithfully flat map of commutative rings.
Assume that $A$ is noetherian and that $B$ is coherent and regular. 
Then $A$ is regular.
\end{cor}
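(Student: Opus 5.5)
This is a direct specialization of \zcref{cor:regular}. Since $B$ is commutative, it is trivially a central $A$-algebra via the structure map $A \to B$. It is faithfully flat by hypothesis. For a commutative ring, left coherence and left regularity coincide with coherence and regularity in the sense used here: every finitely generated ideal has finite projective dimension.

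So the plan is to check these hypotheses and apply \zcref{cor:regular}, which gives that $A$ is regular. The route behind it is the one already established:
\begin{enumerate}
\item By \cite[Theorem 2.1]{CET}, every complex of injective $B$-modules is K-injective.
\item In particular, $\opn{Hom}_A(B,J)$ is K-injective over $B$ for every complex $J$ of injective $A$-modules.
\item \zcref{thm:faithflat} then shows that $J$ is K-injective over $A$.
\item Since every complex of injective $A$-modules is K-injective, \cite[Theorem 2.8]{II} gives that $A$ is regular.
\end{enumerate}

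The only point requiring any care is the convention. The word ``regular'' for the coherent commutative ring $B$ must be read as the coherent notion: every finitely generated ideal has finite projective dimension. This is the notion \cite{CET} needs. When $B$ happens to be noetherian, this agrees with the usual notion that $B_{\q}$ is regular local for all $\q$, so the statement genuinely contains the Grothendieck/Stacks result. There is no real obstacle beyond verifying this.
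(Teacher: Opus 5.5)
Your proposal is correct and matches the paper exactly: the statement is presented as the special case of \zcref{cor:regular} with $B$ commutative, and your chain (\cite[Theorem 2.1]{CET}, then \zcref{thm:faithflat}, then \cite[Theorem 2.8]{II}) is precisely the proof of that corollary. Your side remark that for noetherian $B$ the coherent notion of regularity agrees with the usual one is also right, though the proof does not need it.
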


\section{The Local nature of K-injectivity}\label{sec:loc}

The aim of this section is to show that over a commutative noetherian ring,
one may check K-injectivity at localizations by maximal ideals.
As was shown in \zcref{prop:kinjifAcInj},
K-injectivity over a ring $A$ is related to the big singularity category
$\cat{K}_{\opn{ac}}(\opn{Inj} A)$.

Fix a commutative noetherian ring $A$.
To proceed,
we invoke the theory of actions of a tensor triangulated category on triangulated categories introduced by Stevenson in \cite{St1,St2}. 
In particular, by \cite[Section 3]{St2},
there is an action of $\cat{D}(A)$ on $\cat{K}_{\opn{ac}}(\opn{Inj} A)$,
in the sense of \cite[Definition 3.2]{St1}.
Given $M \in \cat{D}(A)$ and $J \in \cat{K}_{\opn{ac}}(\opn{Inj} A)$,
the action is given by
\[
M \odot J := F_M \otimes_A J,
\]
where $F_M$ is some K-flat resolution of $M$ over $A$,
with the property that $F_M \in \cat{K}(\opn{Flat} A)$.

Next, for any prime ideal $\p \in \opn{Spec}(A)$,
let us denote by $\opn{Inj}_{\p}(A)$ the collection of all injective $A$-modules which are isomorphic to a (possibly empty) direct sum of copies of $E(A/\p)$.
In other words, $J \in \opn{Inj}_{\p}(A)$ if and only if $J$ is an injective $A$-module 
such that $\opn{Ass}_A(J) \subseteq \{\p\}$.
We shall denote by $\Gamma_{\p}(-)$ the $\p$-torsion functor 
\[
\Gamma_{\p}(-) := \varinjlim_n \opn{Hom}_A(A/\p^n,-).
\]

The next fact is, of course, well known:
\begin{prop}\label{prop:gammapisp}
Let $A$ be a commutative noetherian ring,
let $J$ be an injective $A$-module,
and let $\p \in \opn{Spec}(A)$.
Then $\Gamma_{\p}(J_{\p}) \in \opn{Inj}_{\p}(A)$.
\end{prop}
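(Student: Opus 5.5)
We prove the statement using Matlis' structure theory for injective modules over the commutative noetherian ring $A$. The idea is to decompose $J$ into indecomposable injectives and then follow each summand through localization and through $\Gamma_{\p}$.

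First, $J \cong \bigoplus_{\lambda} E(A/\q_\lambda)$ for some family of primes $\q_\lambda$. Localization commutes with direct sums, and over a noetherian ring $\Gamma_{\p}$ also commutes with direct sums, because $A/\p^n$ is finitely presented and direct limits commute with direct sums. Hence
\[
\Gamma_{\p}(J_{\p}) \cong \bigoplus_\lambda \Gamma_{\p}\left(E(A/\q_\lambda)_{\p}\right),
\]
and it suffices to compute $\Gamma_{\p}(E(A/\q)_{\p})$ for a single prime $\q$.

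We use two standard facts about $E(A/\q)$.
\begin{enumerate}
\item Every element of $E(A/\q)$ is killed by a power of $\q$.
\item Every $s \in A\setminus \q$ acts bijectively on $E(A/\q)$.
\end{enumerate}
Now split into cases.
\begin{itemize}
\item If $\q \not\subseteq \p$, pick $s \in \q \setminus \p$. Then $s$ is invertible on $E(A/\q)_{\p}$, and by (1) it is also locally nilpotent there. Hence $E(A/\q)_{\p} = 0$.
\item If $\q \subseteq \p$, then by (2) the localization map $E(A/\q) \to E(A/\q)_{\p}$ is an isomorphism.
\begin{itemize}
\item If $\q \subsetneq \p$, pick $s \in \p \setminus \q$. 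Then $s$ acts injectively on $E(A/\q)$, so no nonzero element is killed by a power of $\p$. Hence $\Gamma_{\p}(E(A/\q)) = 0$.
\item If $\q = \p$, then $\Gamma_{\p}(E(A/\p)) = E(A/\p)$ by (1).
\end{itemize}
\end{itemize}
It follows that $\Gamma_{\p}(J_{\p})$ is isomorphic to the direct sum of those summands $E(A/\q_\lambda)$ with $\q_\lambda = \p$. This is a direct sum of copies of $E(A/\p)$, possibly empty, so it lies in $\opn{Inj}_{\p}(A)$.

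Every step here is routine. The only point that needs care is checking that the facts about $E(A/\q)$ and the commutation of $\Gamma_{\p}$ with infinite direct sums are used only in the noetherian setting, which is the case here.
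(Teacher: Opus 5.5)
Your proof is correct and follows the same route as the paper: use the Matlis decomposition and the fact that $\Gamma_{\p}((-)_{\p})$ commutes with direct sums to reduce to a single $E(A/\q)$, then compute case by case. The only difference is that you derive the vanishing of $E(A/\q)_{\p}$ for $\q\not\subseteq\p$ and of $\Gamma_{\p}(E(A/\q))$ for $\q\subsetneq\p$ from the $\q$-torsion and $A_{\q}$-module structure of $E(A/\q)$, whereas the paper cites Matsumura's Theorem 18.4 for these facts.
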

\begin{proof}
Note that $\opn{Inj}_{\p}(A)$ is closed under direct sums,
and recall that the functor $\Gamma_{\p}((-)_{\p})$ commutes with arbitrary direct sums.
Since $A$ is noetherian, 
any injective $A$-module is a direct sum of indecomposable injectives,
and these are of the form $E(A/\q)$ where $\q \in \opn{Spec}(A)$.
Thus, we may assume that $J = E(A/\q)$ for some $\q \in \opn{Spec}(A)$.
According to the Matlis theory of injectives, \cite[Theorem 18.4]{Matsumura},
we have that $(E(A/\q))_{\p} = 0$,
unless $\q\subseteq \p$, in which case $(E(A/\q))_{\p} = E(A/\q)$.
Moreover, $\Gamma_{\p}(E(A/\q)) = 0$ unless $\p \subseteq \q$,
in which case $\Gamma_{\p}(E(A/\q)) = E(A/\q)$.
Hence, we obtain:
\[
\Gamma_{\p} \left( E(A/q)_{\p} \right) =  \left\{\begin{array}{lr} 
0 & \p \ne \q\\
E(A/\p) & \p = \q
\end{array}
\right.
\]
This shows that $\Gamma_{\p} \left( E(A/q)_{\p} \right) \in \opn{Inj}_{\p}(A)$,
as claimed.
\end{proof}

Next we specialize to the singularity category,
and define the $\p$-part of $\cat{K}_{\opn{ac}}(\opn{Inj} A)$ to be the full subcategory:
\[
W_{\p} = \{X \in \cat{K}_{\opn{ac}}(\opn{Inj} A) \mid \forall n\in \mathbb{Z}, X^n \in \opn{Inj}_{\p}(A)\}.
\]

\begin{lem}\label{lem:acyclic-at-wp}
Let $A$ be a commutative ring,
let $J$ be a complex of injective $A$-modules,
and let $\p \in \opn{Spec}(A)$.
If $\opn{Hom}_A(A_{\p},J)$ is K-injective over $A_{\p}$,
then for all $X \in W_{\p}$,
the complex $\opn{Hom}_A(X,J)$ is acyclic.
\end{lem}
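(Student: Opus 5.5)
The plan is to use adjunction for the colocalization $\opn{Hom}_A(A_{\p},-)$. The point is that every $X \in W_{\p}$ is really a complex of $A_{\p}$-modules, so $\opn{Hom}_A(X,J)$ can be rewritten as a Hom over $A_{\p}$ into the K-injective complex $\opn{Hom}_A(A_{\p},J)$.

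\emph{Step 1: $X$ is a complex of $A_{\p}$-modules.} Each term $X^n$ is a direct sum of copies of $E(A/\p)$. By the Matlis theory already used in the proof of \zcref{prop:gammapisp} (\cite[Theorem 18.4]{Matsumura}), every $s \in A\setminus \p$ acts bijectively on $E(A/\p)$. Hence it acts bijectively on each $X^n$, so the canonical map $X^n \to X^n\otimes_A A_{\p}$ is an isomorphism. The differentials are $A$-linear and therefore automatically $A_{\p}$-linear. So $X$ is canonically a complex of $A_{\p}$-modules, with $X \cong A_{\p}\otimes_{A_{\p}} X$ and $X \cong A_{\p}\otimes_A X$.

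The lemma is stated for an arbitrary commutative ring, while Matlis theory is usually stated over noetherian rings. So I will check bijectivity directly. Injectivity: $E(A/\p)$ is an essential extension of $A/\p$, and $s$ is a nonzerodivisor on the domain $A/\p$, so $\ker(s)\cap A/\p=0$ forces $\ker(s)=0$. Surjectivity: $sE(A/\p)\cong E(A/\p)$ is an injective, hence direct, summand of $E(A/\p)$. Since $E(A/\p)$ is indecomposable and $sE(A/\p)$ is nonzero, the summand is everything. This is the step I expect to need the most care, though it is elementary.

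\emph{Step 2: adjunction.} For a complex $Y$ of $A_{\p}$-modules, Hom-tensor adjunction gives a natural isomorphism of complexes
\[
\opn{Hom}_A(Y,J) \cong \opn{Hom}_A(A_{\p}\otimes_{A_{\p}} Y, J) \cong \opn{Hom}_{A_{\p}}\bigl(Y,\opn{Hom}_A(A_{\p},J)\bigr).
\]
Applying this with $Y = X$ gives $\opn{Hom}_A(X,J) \cong \opn{Hom}_{A_{\p}}(X,\opn{Hom}_A(A_{\p},J))$.

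\emph{Step 3: conclude.} Acyclicity of $X$ does not depend on whether we view it over $A$ or over $A_{\p}$, so $X$ is an acyclic complex of $A_{\p}$-modules. Since $\opn{Hom}_A(A_{\p},J)$ is K-injective over $A_{\p}$ by hypothesis, $\opn{Hom}_{A_{\p}}(X,\opn{Hom}_A(A_{\p},J))$ is acyclic. By Step 2, $\opn{Hom}_A(X,J)$ is acyclic.

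Nothing beyond acyclicity of $X$ is used here; in particular the injectivity of the $X^n$ only enters through their being $A_{\p}$-modules.
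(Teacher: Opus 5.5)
Your proposal is correct and follows the paper's own proof. The paper likewise views $X$ as a complex of $A_{\p}$-modules, uses the adjunction $\opn{Hom}_A(X,J)\cong\opn{Hom}_{A_{\p}}(X,\opn{Hom}_A(A_{\p},J))$, and concludes from acyclicity of $X$ and K-injectivity of $\opn{Hom}_A(A_{\p},J)$. Your direct check that each $s\in A\setminus\p$ acts bijectively on $E(A/\p)$ supplies a detail the paper takes for granted, and it works over any commutative ring.
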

\begin{proof}
If $X \in W_{\p}$,
note that each $X^n$,
being a direct sum of copies of $E(A/\p)$,
is an $A_{\p}$-module,
which implies that there is an isomorphism $X \cong X\otimes_{A_{\p}} A_{\p}$.
Hence,
by adjunction
\[
\opn{Hom}_A(X,J) \cong \opn{Hom}_A(X\otimes_{A_{\p}} A_{\p},J) \cong
\opn{Hom}_{A_{\p}}(X,\opn{Hom}_A(A_{\p},J)).
\]
Since $X$ is assumed to be acyclic,
and $\opn{Hom}_A(A_{\p},J)$ is assumed to be K-injective over $A_{\p}$,
we know that 
$\opn{Hom}_{A_{\p}}(X,\opn{Hom}_A(A_{\p},J))$ is acyclic,
so the above isomorphism shows that $\opn{Hom}_A(X,J)$ is acyclic.
\end{proof}

Following \cite[Section 3]{PSY} and \cite[Section 11.4]{CFH},
recall that if $A$ is a commutative ring,
$I \subseteq A$ is a finitely generated ideal,
and $(a_1,\dots,a_n)$ is a finite sequence of generators of $I$,
then there is a bounded complex of flat modules $C(A;a_1,\dots,a_n)$,
called the infinite dual Koszul complex, or the Čech complex,
associated to $(a_1,\dots,a_n)$.
Moreover, for any complex of $A$-modules $M$,
there is a natural $A$-linear map
$\eta_M:\Gamma_I(M) \to C(A;a_1,\dots,a_n) \otimes_A M$.

\begin{prop}\label{prop:gammaAcyclic}
Let $A$ be a commutative noetherian ring,
let $I \subseteq A$ be an ideal,
and let $J$ be a complex of injective $A$-modules.
Let $(a_1,\dots,a_n)$ be a finite sequence of generators of $I$.
Then the natural map 
$\eta_J:\Gamma_I(J) \to C(A;a_1,\dots,a_n) \otimes_A J$ is a homotopy equivalence.
In particular, if $J$ is acyclic,
then $\Gamma_{I}(J)$ is also acyclic.
\end{prop}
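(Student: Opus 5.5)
We will prove that $\eta_J$ is a homotopy equivalence by showing that it is a degreewise split map whose cone is contractible. The cone will be controlled term by term, using that $J$ consists of injectives over a noetherian ring.

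\textbf{Step 1: the case of a single injective module.} Recall that $C(A;a_1,\dots,a_n)$ is the tensor product of the complexes $0\to A\to A[a_i^{-1}]\to 0$. Hence it is a bounded complex concentrated in degrees $0,\dots,n$: its degree $0$ term is $A$, and its degree $k\ge 1$ term is $\bigoplus_{i_1<\dots<i_k} A[(a_{i_1}\cdots a_{i_k})^{-1}]$.

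For an injective $A$-module $E$, the complex $C(A;\underline a)\otimes_A E$ is a bounded complex of injective modules, since localizations of injectives over a noetherian ring are injective. Its cohomology is local cohomology $\mrm{H}^k_I(E)$, which vanishes for $k\ge1$ because $E$ is injective. Its $\mrm{H}^0$ is $\Gamma_I(E)$, and $\eta_E$ identifies $\Gamma_I(E)$ with that $\mrm{H}^0$.

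Consider the augmented complex $0\to\Gamma_I(E)\to E\to \bigoplus_i E_{a_i}\to\cdots$. It is a bounded exact complex of injectives, because $\Gamma_I(E)$ is also injective in the noetherian case. A bounded exact complex of injectives is contractible: its kernels are successively split off, starting from the left. Thus $\opn{cone}(\eta_E)$ is contractible.

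\textbf{Step 2: passing to the complex $J$.} Form the double complex whose columns are $0\to\Gamma_I(J^p)\to C(A;\underline a)\otimes_A J^p$, with the horizontal differentials induced by $d_J$. Then $\opn{cone}(\eta_J)$ is the totalization of a double complex with only finitely many nonzero rows, namely rows $-1,0,\dots,n$. Each column of this double complex is the complex from Step 1, which is contractible.

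A finite-height double complex with contractible columns need not have contractible totalization unless the contractions are compatible. We will therefore argue instead via a filtration by rows. The totalization $T=\opn{cone}(\eta_J)$ has a finite filtration by the rows. Each subquotient is a shift of a complex of injectives, and $T$ is an iterated extension that is degreewise split, so there are triangles in $\cat{K}(\opn{Inj}A)$. This alone does not give contractibility, so we must use a better argument.

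A cleaner route is to test against $\opn{Hom}_{\cat K(A)}(-,-)$ via \zcref{prop:inj-contr}. First, $T$ is acyclic: a bounded-height double complex with exact columns has an acyclic totalization, by the standard finite filtration argument. Second, by \zcref{prop:inj-contr}, it remains to show that the kernels $Z^m$ of $T$ are injective. We will do this by showing that $\opn{Ext}^1_A(A/\mathfrak a, Z^m)=0$ for all ideals $\mathfrak a$, which is Baer's criterion. Since $A/\mathfrak a$ has a resolution by finitely generated free modules, \zcref{prop:homAcyc} identifies this Ext group with $\mrm H^{m+1}\opn{Hom}_A(A/\mathfrak a,T)$. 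Moreover, $\opn{Hom}_A(A/\mathfrak a,-)$ applied columnwise preserves exactness of each column. This holds because each column is contractible, and so remains contractible after applying any additive functor. Hence $\opn{Hom}_A(A/\mathfrak a,T)$ is again the totalization of a bounded-height double complex with exact columns, and is therefore acyclic. This gives injectivity of $Z^m$, so $T$ is contractible and $\eta_J$ is a homotopy equivalence.

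\textbf{Step 3: the consequence.} If $J$ is acyclic, then $C(A;\underline a)\otimes_A J$ is acyclic, again by the bounded-height double complex argument, since each $C^k$ is flat. Because $\eta_J$ is a homotopy equivalence, $\Gamma_I(J)$ is acyclic as well.

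\textbf{Main obstacle.} The main obstacle is Step 2, namely upgrading columnwise contractibility to contractibility of the totalization. Our plan is to handle it through the Baer criterion route described above.
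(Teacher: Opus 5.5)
Your proof is correct, but it takes a different route from the paper's.

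\textbf{The paper's route.} It first treats bounded below $J$. There $\Gamma_I(J)$ and $C(A;a_1,\dots,a_n)\otimes_A J$ are bounded below complexes of injectives, hence K-injective. The map $\eta_J$ is a quasi-isomorphism by a cited lemma from Christensen--Foxby--Holm, and a quasi-isomorphism between K-injectives is a homotopy equivalence. The paper then notes that the $X\in\cat{K}(\opn{Inj} A)$ with $\eta_X$ a homotopy equivalence form a localizing subcategory. It invokes Krause's result that bounded below complexes of injectives generate $\cat{K}(\opn{Inj} A)$.

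\textbf{Your route.} You work one module at a time. For injective $E$, the augmented \v{C}ech complex $0\to\Gamma_I(E)\to C(A;\underline a)\otimes_A E$ is a bounded exact complex of injectives, hence contractible. You then assemble these columns into $\opn{cone}(\eta_J)$, a double complex with finitely many rows. You show the kernels of the cone are injective via \zcref{prop:homAcyc} and Baer's criterion, then apply \zcref{prop:inj-contr}.

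\textbf{Comparison.} Your argument is more elementary and self-contained. It needs no compact generation and no localizing-subcategory argument. The only noetherian inputs are that $\Gamma_I$ and localization preserve injectives, and that \v{C}ech cohomology of an injective vanishes in positive degrees. The paper's proof is shorter given its citations and fits the triangulated framework used in the next section. When you apply \zcref{prop:homAcyc} and \zcref{prop:inj-contr} to $T=\opn{cone}(\eta_J)$, state explicitly that $T$ is a complex of injectives. Each $T^m$ is a finite direct sum of $\Gamma_I(J^{m+1})$ and localizations of the $J^{m-k}$. Both propositions require this.

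\textbf{Your ``main obstacle'' is not real.} A double complex with finitely many rows and contractible columns always has contractible totalization. Let $s$ be the columnwise contraction for the vertical part $d_v$ of the total differential $D=d_h+d_v$, and let $S$ be the induced map on the totalization. Then $DS+SD=1+N$ with $N=d_hs+sd_h$. Since $N$ lowers the row index by one and there are finitely many rows, $N$ is nilpotent. Since $D$ commutes with $DS+SD$, it also commutes with $(1+N)^{-1}=\sum_k(-N)^k$. Hence $(1+N)^{-1}S$ is a contracting homotopy of the totalization. So your Step 1 already finishes the proof, and the Baer detour, though valid as written, can be dropped. Even on that detour, the finite free resolution of $A/\mathfrak a$ and Baer's criterion are unnecessary. \zcref{prop:homAcyc} holds for every module $M$, and your argument shows $\opn{Hom}_A(M,T)$ is acyclic for all $M$.
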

\begin{proof}
Step 1: Suppose first that $J$ is bounded below.
Then $\Gamma_I(J)$ and $C(A;a_1,\dots,a_n) \otimes_A J$ are also bounded below complexes of injective modules (both of these require $A$ to be noetherian), and hence, they are K-injective.
According to \cite[Lemma 13.3.13]{CFH} (the proof of \cite[Corollary 3.25]{PSY} can also be applied), 
the fact that $J$ is a complex of injectives implies that the natural map
\[
\Gamma_I(J) \to  C(A;a_1,\dots,a_n) \otimes_A J
\]
is a quasi-isomorphism.
As a quasi-isomorphism between K-injective complexes is a homotopy equivalence,
the result follows.
Step 2: Let $J$ be an arbitrary complex of injective $A$-modules.
Define the subcategory
\[
\mathcal{S} = \{X \in \cat{K}(\opn{Inj}(A)) \mid \eta_X \mbox{ is a homotopy equivalence}\}.
\]
Since $\eta$ is a natural transformation between coproduct preserving triangulated functors, it follows that $\mathcal{S}$ is a localizing subcategory of  $\cat{K}(\opn{Inj}(A))$. 
Moreover, we have seen that it contains all bounded below complexes of injective modules.
By the proof of \cite[Proposition 2.3]{Kr},
we deduce that $\mathcal{S} =  \cat{K}(\opn{Inj}(A))$.
Finally, assuming that $J$ is acyclic,
since $C(A;a_1,\dots,a_n)$ is a bounded complex of flat modules,
it is K-flat,
so the tensor product $C(A;a_1,\dots,a_n) \otimes_A J$ is also acyclic.
As $\Gamma_I(J)$ is homotopy equivalent to an acyclic complex, it is acyclic.
\end{proof}

For $\p \in \opn{Spec}(A)$, 
recall that Stevenson's framework \cite{St1} constructed from the action of $\cat{D}(A)$ on $\cat{K}_{\opn{ac}}(\opn{Inj} A)$ local cohomology and localization functors $\mathbf{\Gamma}_{\p}, L_{\p}:\cat{K}_{\opn{ac}}(\opn{Inj} A) \to \cat{K}_{\opn{ac}}(\opn{Inj} A)$. By \cite[Proposition 3.9]{St2},
they are given by $\mathbf{\Gamma}_{\p}(-) \cong C(A;a_1,\dots,a_n)\otimes -$
(where $(a_1,\dots,a_n)$ is a sequence of elements that generates $\p$),
and $L_{\p}(-) \cong (-)\otimes_A A_{\p}$.
\zcref{prop:gammaAcyclic} shows that for $J \in \cat{K}_{\opn{ac}}(\opn{Inj} A)$,
we see that $\mathbf{\Gamma}_{\p} L_{\p} ( J) \cong \Gamma_{\p}(J_{\p})$.
Thus, we will identify them in what follows.

\begin{prop}\label{prop:gammainWp}
Let $A$ be a commutative noetherian ring,
let $\p \in \opn{Spec}(A)$,
and let $X \in \cat{K}_{\opn{ac}}(\opn{Inj} A)$.
Then $\Gamma_{\p}(X_{\p}) \in W_{\p}$.
\end{prop}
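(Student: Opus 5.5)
We need to show two things about $\Gamma_{\p}(X_{\p})$: each of its terms lies in $\opn{Inj}_{\p}(A)$, and the complex is acyclic. Being a complex of injectives comes along with the first of these. The degreewise statement is immediate from \zcref{prop:gammapisp}. Since $\Gamma_{\p}(-)$ and localization $(-)_{\p}$ are additive functors applied termwise, the $n$-th term of $\Gamma_{\p}(X_{\p})$ is $\Gamma_{\p}((X^n)_{\p})$. Each $X^n$ is an injective $A$-module, so $\Gamma_{\p}((X^n)_{\p}) \in \opn{Inj}_{\p}(A)$. In particular $\Gamma_{\p}(X_{\p})$ is a complex of injective $A$-modules.

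For acyclicity I would proceed in two steps.

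\textbf{Step 1: $X_{\p}$ is an acyclic complex of injective $A$-modules.} Acyclicity holds because localization is exact, so $\mrm{H}^n(X_{\p}) \cong \mrm{H}^n(X)_{\p} = 0$. Each term is injective over $A$ because $A$ is noetherian: $(X^n)_{\p}$ is a direct sum of localizations of indecomposables $E(A/\q)$. By the Matlis theory recalled in the proof of \zcref{prop:gammapisp}, each such localization is either $0$ or $E(A/\q)$. A direct sum of injectives over a noetherian ring is injective.

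\textbf{Step 2: apply \zcref{prop:gammaAcyclic}.} Take $J = X_{\p}$ and $I = \p$, with a finite generating sequence $(a_1,\dots,a_n)$ of $\p$ (available since $A$ is noetherian). The proposition gives a homotopy equivalence $\Gamma_{\p}(X_{\p}) \simeq C(A;a_1,\dots,a_n)\otimes_A X_{\p}$, and its ``in particular'' clause shows directly that $\Gamma_{\p}(X_{\p})$ is acyclic.

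Combining the two parts, $\Gamma_{\p}(X_{\p})$ is an acyclic complex of injectives whose terms all lie in $\opn{Inj}_{\p}(A)$, so it belongs to $W_{\p}$.

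I expect no real obstacle. The only point needing care is that \zcref{prop:gammaAcyclic} requires a complex of injectives. That is why Step 1 must verify that localizing $X$ preserves injectivity of terms, which uses noetherianity. Alternatively, one could skip Step 1's injectivity check by arguing via Stevenson's identification $\mathbf{\Gamma}_{\p}L_{\p}(X)\cong\Gamma_{\p}(X_{\p})$, since these functors preserve $\cat{K}_{\opn{ac}}(\opn{Inj} A)$. The direct argument above seems cleaner.
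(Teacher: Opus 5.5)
Your proof is correct and follows essentially the same route as the paper: you get degreewise membership in $\opn{Inj}_{\p}(A)$ from \zcref{prop:gammapisp}, acyclicity of $X_{\p}$ from exactness of localization, and acyclicity of $\Gamma_{\p}(X_{\p})$ from \zcref{prop:gammaAcyclic}. Your Step~1 check that the terms of $X_{\p}$ are injective, which is needed to invoke \zcref{prop:gammaAcyclic}, is a detail the paper leaves implicit, and you handle it correctly.
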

\begin{proof}
For any $n \in \mathbb{Z}$,
the $A$-module $X^n$ is injective,
so by \zcref{prop:gammapisp},
we know that $\Gamma_{\p}((X^n)_{\p}) \in \opn{Inj}_{\p}(A)$.
Since localization is exact,
the fact that $X$ is acyclic implies that $X_{\p}$ is acyclic.
Since $A$ is noetherian, 
we deduce from \zcref{prop:gammaAcyclic} that $\Gamma_{\p}(X_{\p})$ is acyclic.
Thus, $\Gamma_{\p}(X_{\p}) \in \cat{K}_{\opn{ac}}(\opn{Inj} A)$,
and $\Gamma_{\p}((X^n)_{\p}) \in \opn{Inj}_{\p}(A)$,
which, by definition, says that $\Gamma_{\p}(X_{\p}) \in W_{\p}$.
\end{proof}

Recall that the singular locus of $A$ is defined to be
\[
\opn{Sing}(A) = \{\p \in \opn{Spec}(A) \mid \opn{gl}\dim(A_{\p}) = +\infty\}.
\]
We denote by $\opn{Max}(A)$ the collection of all maximal ideals in $A$.

The next lemma shows that for the question of K-injectivity,
many primes come for free.

\begin{lem}\label{lem:allprimes}
Let $A$ be a commutative noetherian ring,
and let $J$ be a complex of injective $A$-modules.
Suppose that for all maximal ideals $\m \in \opn{Sing}(A) \cap \opn{Max}(A)$,
it holds that $\opn{Hom}_A(A_{\m},J)$ is K-injective over $A_{\m}$.
Then for all $\p \in \opn{Spec}(A)$,
it holds that $\opn{Hom}_A(A_{\p},J)$ is K-injective over $A_{\p}$.
\end{lem}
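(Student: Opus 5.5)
Fix $\p \in \opn{Spec}(A)$; we must show that $\opn{Hom}_A(A_{\p},J)$ is K-injective over $A_{\p}$. There are two cases.

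\emph{Case 1: $\p \notin \opn{Sing}(A)$.} Then $A_{\p}$ is regular, so $\opn{gl}\dim(A_{\p}) < \infty$. By \cite[Theorem 2.1]{CET} (or \cite{II}), every complex of injective $A_{\p}$-modules is K-injective. The complex $\opn{Hom}_A(A_{\p},J)$ is a complex of injective $A_{\p}$-modules, since $\opn{Hom}_A(A_{\p},-)$ is right adjoint to the exact restriction functor. Hence it is K-injective, and nothing further is needed.

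\emph{Case 2: $\p \in \opn{Sing}(A)$.} Choose a maximal ideal $\m \supseteq \p$. The singular locus is stable under generization? No: regularity localizes, so if $A_{\m}$ were regular then $A_{\p}$ would be regular. Hence $\p \in \opn{Sing}(A)$ forces $\m \in \opn{Sing}(A) \cap \opn{Max}(A)$. By hypothesis, $K := \opn{Hom}_A(A_{\m},J)$ is K-injective over $A_{\m}$, and it is a complex of injective $A_{\m}$-modules. Since $A_{\p} \cong (A_{\m})_{\p A_{\m}}$, adjunction gives
\[
\opn{Hom}_A(A_{\p},J) \cong \opn{Hom}_A(A_{\p}\otimes_{A_{\m}} A_{\m},J) \cong \opn{Hom}_{A_{\m}}(A_{\p},K).
\]
Colocalization preserves K-injectivity over any commutative ring: for an acyclic complex $X$ of $A_{\p}$-modules,
\[
\opn{Hom}_{A_{\p}}\bigl(X,\opn{Hom}_{A_{\m}}(A_{\p},K)\bigr) \cong \opn{Hom}_{A_{\m}}(X,K),
\]
and the right-hand side is acyclic. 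This is the same argument as in the introduction, or \cite[Lemma 13.1.20]{YekBook} applied to the ring map $A_{\m} \to A_{\p}$. Hence $\opn{Hom}_A(A_{\p},J)$ is K-injective over $A_{\p}$.

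Step 2 is where care is needed. We must check that the adjunction isomorphism is compatible with the $A_{\p}$-module structures, which holds because $A_{\p}$ is an $A_{\m}$-algebra and all the rings involved are commutative. We must also confirm the fact that regularity of $A_{\m}$ implies regularity of $A_{\p}$ for $\p \subseteq \m$, which is Serre's theorem that a localization of a regular local ring is regular. With these two points checked, the lemma is established for every $\p \in \opn{Spec}(A)$.
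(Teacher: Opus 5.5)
Your proof is correct and uses the same two ingredients as the paper. The first is that over a regular local ring every complex of injectives is K-injective; the second is that colocalization along $A_{\m}\to A_{\p}$ preserves K-injectivity, by adjunction and \cite[Lemma 13.1.20]{YekBook}. The only difference is bookkeeping: the paper applies the regularity fact at the regular maximal ideals, so that $\opn{Hom}_A(A_{\m},J)$ is K-injective for \emph{every} maximal $\m$, and then colocalizes from an arbitrary $\m\supseteq\p$, which avoids your extra step that $\opn{Sing}(A)$ is closed under specialization.
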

\begin{proof}
If $\m \subseteq A$ is a maximal ideal such that $\m \notin \opn{Sing}(A)$,
then by definition $A_{\m}$ is a regular local ring.
According to \cite[Proposition 2.1]{II},
over a regular local ring,
any complex of injective modules is K-injective.
We deduce that for all $\m \in \opn{Max}(A)$,
it holds that $\opn{Hom}_A(A_{\m},J)$ is K-injective over $A_{\m}$.
Let $\p \in \opn{Spec}(A)$ be an arbitrary prime ideal,
and let $\m \in \opn{Max}(A)$ be some maximal ideal such that $\p \subseteq \m$.
Then the localization map $A \to A_{\p}$ factors as $A \to A_{\m} \to A_{\p}$.
Hence, by adjunction,
there is an isomorphism of complexes of $A_{\p}$-modules:
\[
\opn{Hom}_A(A_{\p},J) \cong \opn{Hom}_A(A_{\p},\opn{Hom}_A(A_{\m},J)).
\]
Since $\opn{Hom}_A(A_{\m},J)$ is K-injective over $A_{\m}$,
we deduce by \cite[Lemma 13.1.20]{YekBook} that $\opn{Hom}_A(A_{\p},\opn{Hom}_A(A_{\m},J))$ is K-injective over $A_{\p}$,
as claimed.
\end{proof}

Here is the main result of this section.
\begin{thm}\label{thm:main-localize}
Let $A$ be a commutative noetherian ring,
and let $J$ be a complex of injective $A$-modules.
Then the following are equivalent:
\begin{enumerate}
\item $J$ is K-injective over $A$.
\item For all $\p \in \opn{Spec}(A)$,
the complex $\opn{Hom}_A(A_{\p},J)$ is K-injective over $A_{\p}$.
\item For any maximal ideal $\m$ in the singular locus of $A$,
it holds that $\opn{Hom}_A(A_{\m},J)$ is K-injective over $A_{\m}$.
\end{enumerate}
\end{thm}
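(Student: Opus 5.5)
The implications (1)$\Rightarrow$(2)$\Rightarrow$(3) are formal, so the work is in (3)$\Rightarrow$(1). For (1)$\Rightarrow$(2), if $J$ is K-injective over $A$ then $\opn{Hom}_A(A_{\p},J)$ is K-injective over $A_{\p}$ by \cite[Lemma 13.1.20]{YekBook}, exactly as in the proof of \zcref{thm:faithflat}. The implication (2)$\Rightarrow$(3) is trivial.

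For (3)$\Rightarrow$(1), first apply \zcref{lem:allprimes}: hypothesis (3) upgrades to the statement that $\opn{Hom}_A(A_{\p},J)$ is K-injective over $A_{\p}$ for every $\p \in \opn{Spec}(A)$. \zcref{lem:acyclic-at-wp} then gives that $\opn{Hom}_A(X,J)$ is acyclic for every $X \in W_{\p}$ and every $\p$. By \zcref{prop:kinjifAcInj}, it suffices to show that $\opn{Hom}_A(X,J)$ is acyclic for every $X \in \cat{K}_{\opn{ac}}(\opn{Inj} A)$. Equivalently, since $\opn{Hom}_{\cat{K}(A)}(X,J[n]) = \mrm{H}^n\opn{Hom}_A(X,J)$, I want the full subcategory
\[
\mathcal{L} = \{X \in \cat{K}_{\opn{ac}}(\opn{Inj} A) \mid \opn{Hom}_A(X,J) \text{ is acyclic}\}
\]
to be all of $\cat{K}_{\opn{ac}}(\opn{Inj} A)$. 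This $\mathcal{L}$ is a triangulated subcategory closed under arbitrary coproducts, because $\opn{Hom}_A(-,J)$ turns triangles into triangles and coproducts into products, and products of acyclic complexes are acyclic. So $\mathcal{L}$ is localizing.

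By \zcref{prop:gammainWp} and the identification $\mathbf{\Gamma}_{\p}L_{\p}(X) \cong \Gamma_{\p}(X_{\p})$ explained before it, $\mathbf{\Gamma}_{\p}L_{\p} X \in W_{\p} \subseteq \mathcal{L}$ for every $X$ and every $\p$. Now I invoke the local-to-global principle for Stevenson's action of $\cat{D}(A)$ on $\cat{K}_{\opn{ac}}(\opn{Inj} A)$. Since $A$ is noetherian, $\opn{Spec}(A)$ is a noetherian space, and \cite{St1} shows the local-to-global principle holds for actions of such rigidly-compactly generated categories. This principle says that every $X$ lies in the localizing subcategory generated by $\{\mathbf{\Gamma}_{\p}X \mid \p \in \opn{Spec}(A)\}$, where $\mathbf{\Gamma}_{\p}$ denotes the full local functor $\mathbf{\Gamma}_{\mathcal{V}(\p)}L_{\mathcal{Z}(\p)}$. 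In Stevenson's notation this is exactly what the paper identifies with $\Gamma_{\p}(X_{\p})$, via \cite[Proposition 3.9]{St2}. Hence $X \in \mathcal{L}$ for every $X$, and so $J$ is K-injective.

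The main obstacle is the correct citation and applicability of the local-to-global principle in this setting, and matching Stevenson's $\mathbf{\Gamma}_{\p}$ with $\Gamma_{\p}((-)_{\p})$; the paper has prepared the latter identification. If the principle were unavailable, I would fall back on a noetherian induction on $\opn{Spec}(A)$. That argument would use the localization triangles $\mathbf{\Gamma}_{V}X \to X \to L_{V}X$ for specialization closed $V$, with Čech complexes, and would build $X$ from its $\p$-local pieces. I would rather cite \cite{St1}.
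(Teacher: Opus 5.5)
Your argument is correct and follows the paper's own proof. You upgrade (3) to (2) via \zcref{lem:allprimes}, obtain acyclicity of $\opn{Hom}_A(\Gamma_{\p}(X_{\p}),J)$ from \zcref{lem:acyclic-at-wp} and \zcref{prop:gammainWp}, apply Stevenson's local-to-global principle to the localizing subcategory $\mathcal{L}$, and finish with \zcref{prop:kinjifAcInj}.

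The one imprecision is in how you state the principle. In \cite[Definition 6.1, Theorem 6.9]{St1} it is phrased in terms of localizing subcategories that are also closed under the $\cat{D}(A)$-action, and the paper applies it using compact generation of $\cat{K}_{\opn{ac}}(\opn{Inj} A)$ from \cite[Corollary 5.4]{Kr}. So you should say why $\mathcal{L}$ is closed under the action. The paper cites \cite[Lemma 3.10]{St2} for this; it also follows directly because $\cat{D}(A)$ is generated by $A$ as a localizing subcategory.
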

\begin{proof}
$(1) \implies (2)$ holds by \cite[Lemma 13.1.20]{YekBook} applied to the localization map $A \to A_{\p}$.
$(2) \implies (3)$ is trivial,
while $(3) \implies (2)$ is \zcref{lem:allprimes}.
$(2) \implies (1)$:
Assume that $\opn{Hom}_A(A_{\p},J)$ is K-injective over $A_{\p}$ for all $\p \in \opn{Spec}(A)$.
According to \zcref{lem:acyclic-at-wp},
we know that for any $\p \in \opn{Spec}(A)$,
and any $X \in W_{\p}$,
the complex $\opn{Hom}_A(X,J)$ is acyclic.
Moreover, \zcref{prop:gammainWp} tells us that for any $X \in \cat{K}_{\opn{ac}}(\opn{Inj} A)$, we have that $\Gamma_{\p}(X_{\p}) \in W_{\p}$.
Combined, this shows that for any $\p \in \opn{Spec}(A)$, and any 
$X \in \cat{K}_{\opn{ac}}(\opn{Inj} A)$,
the complex $\opn{Hom}_A(\Gamma_{\p}(X_{\p}),J)$ is acyclic.
Let us define the following full subcategory of $\cat{K}_{\opn{ac}}(\opn{Inj} A)$:
\[
\mathcal{S} := \{X \in \cat{K}_{\opn{ac}}(\opn{Inj} A) \mid \opn{Hom}_A(X,J) \mbox{ is acyclic} \}.
\]
Clearly, $\mathcal{S}$ is a triangulated subcategory of $\cat{K}_{\opn{ac}}(\opn{Inj} A)$.
Moreover, if $\{X_{\alpha}\}_{\alpha \in I}$ is a collection of objects in $\mathcal{S}$,
then
\[
\opn{Hom}_A\left(\bigoplus_{\alpha \in I} X_{\alpha}, J\right) \cong \prod_{\alpha \in I} \opn{Hom}_A\left(X_{\alpha},J\right),
\]
and as the product of acyclic complexes over a ring is acyclic
(since products are exact in $\opn{Mod}(A)$), we deduce that $\bigoplus_{\alpha \in I} X_{\alpha} \in \mathcal{S}$.
Hence, $\mathcal{S}$ is a localizing subcategory of $\cat{K}_{\opn{ac}}(\opn{Inj} A)$.

Given an arbitrary acyclic complex of injectives $X \in \cat{K}_{\opn{ac}}(\opn{Inj} A)$,
we have seen above that $\Gamma_{\p}(X_{\p}) \in \mathcal{S}$,
and since $\mathcal{S}$ is localizing,
we deduce that $\mathcal{S}$ contains the localizing subcategory generated by 
the set 
\begin{equation}\label{eqn:catGam}
\{\Gamma_{\p}(X_{\p}) \mid \p \in \opn{Spec}(A)\} \subseteq \mathcal{S}.
\end{equation}
Furthermore, by \cite[Lemma 3.10]{St2},
we know that $\mathcal{S}$ is closed under the action of $\cat{D}(A)$.
According to \cite[Corollary 5.4]{Kr},
we know that $\cat{K}_{\opn{ac}}(\opn{Inj} A)$ is compactly generated.
Hence, by \cite[Theorem 6.9]{St1},
the $\cat{D}(A)$ action on $\cat{K}_{\opn{ac}}(\opn{Inj} A)$ satisfies the local-to-global principle, in the sense of \cite[Definition 6.1]{St1},
which means that $X$ belongs to the smallest localizing subcategory of $\cat{K}_{\opn{ac}}(\opn{Inj} A)$ which contains $\{\Gamma_{\p}(X_{\p}) \mid \p \in \opn{Spec}(A)\}$ and is closed by the action of $\cat{D}(A)$.
This shows that $X \in \mathcal{S}$.
We have thus shown that for all $X \in \cat{K}_{\opn{ac}}(\opn{Inj} A)$,
it holds that $\opn{Hom}_A(X,J)$ is acyclic,
so by \zcref{prop:kinjifAcInj},
we deduce that $J$ is K-injective over $A$.
\end{proof}

For a commutative ring $A$ and an element $f \in A$,
let us set $D(f) = \{\p \in \opn{Spec}(A) \mid f\notin \p\}$.
Here is an improvement of \zcref{cor:pos}:
\begin{cor}\label{cor:cover}
Let $A$ be a commutative noetherian ring,
and let $J$ be a complex of injective $A$-modules.
Let $a_1,\dots,a_n \in A$ be such that
\[
\opn{Sing}(A) \cap \opn{Max}(A) \subseteq \bigcup_{i=1}^n D(a_i).
\]
Then $J$ is K-injective over $A$ if and only if for all $1 \le i \le n$,
it holds that 
$\opn{Hom}_A(A[a_i^{-1}],J)$ is K-injective over $A[a_i^{-1}]$.
\end{cor}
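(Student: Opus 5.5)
The plan is to reduce both directions to \zcref{thm:main-localize}, using the transitivity of colocalization already used in the proof of \zcref{lem:allprimes}.

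For the forward direction, assume $J$ is K-injective over $A$. Applying \cite[Lemma 13.1.20]{YekBook} to the flat ring map $A \to A[a_i^{-1}]$ gives that $\opn{Hom}_A(A[a_i^{-1}],J)$ is K-injective over $A[a_i^{-1}]$ for each $i$. This direction needs no further work.

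For the converse, assume each $\opn{Hom}_A(A[a_i^{-1}],J)$ is K-injective over $A[a_i^{-1}]$. By the implication $(3) \implies (1)$ of \zcref{thm:main-localize}, it is enough to show the following: for every maximal ideal $\m \in \opn{Sing}(A) \cap \opn{Max}(A)$, the complex $\opn{Hom}_A(A_{\m},J)$ is K-injective over $A_{\m}$. Fix such an $\m$. By hypothesis $\m \in D(a_i)$ for some $i$, that is, $a_i \notin \m$. Then the localization map $A \to A_{\m}$ factors as $A \to A[a_i^{-1}] \to A_{\m}$, and $A_{\m}$ is the localization of $A[a_i^{-1}]$ at the prime $\m A[a_i^{-1}]$. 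Using $A_{\m} \cong A_{\m} \otimes_{A[a_i^{-1}]} A[a_i^{-1}]$ and Hom–tensor adjunction, exactly as in the proof of \zcref{lem:allprimes}, we get an isomorphism of complexes of $A_{\m}$-modules
\[
\opn{Hom}_A(A_{\m},J) \cong \opn{Hom}_{A[a_i^{-1}]}\bigl(A_{\m},\opn{Hom}_A(A[a_i^{-1}],J)\bigr).
\]
Since $\opn{Hom}_A(A[a_i^{-1}],J)$ is K-injective over $A[a_i^{-1}]$, applying \cite[Lemma 13.1.20]{YekBook} to the map $A[a_i^{-1}] \to A_{\m}$ shows that the right-hand side is K-injective over $A_{\m}$. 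Hence condition (3) of \zcref{thm:main-localize} holds, and $J$ is K-injective over $A$.

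The main point to check is the adjunction isomorphism. It holds because $A_{\m}$ is an $A[a_i^{-1}]$-module, so $\opn{Hom}_A(A_{\m},-) = \opn{Hom}_{A[a_i^{-1}]}(A_{\m},\opn{Hom}_A(A[a_i^{-1}],-))$. There is also no issue with maximal ideals containing all the $a_i$: by hypothesis every such ideal lies outside $\opn{Sing}(A)$, so condition (3) does not require anything of it.
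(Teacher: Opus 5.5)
Your proposal is correct and follows essentially the same route as the paper's proof. For each $\m \in \opn{Sing}(A)\cap\opn{Max}(A)$ you pick $a_i \notin \m$, factor $A \to A[a_i^{-1}] \to A_{\m}$, and use adjunction to write $\opn{Hom}_A(A_{\m},J)$ as a colocalization of the K-injective complex $\opn{Hom}_A(A[a_i^{-1}],J)$; you then conclude via the implication $(3)\implies(1)$ of \zcref{thm:main-localize}.
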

\begin{proof}
Clearly, the condition holds if $J$ is K-injective over $A$.
Conversely, assume that each $\opn{Hom}_A(A[a_i^{-1}],J)$ is K-injective over $A[a_i^{-1}]$.
By assumption, for any $\m \in \opn{Sing}(A) \cap \opn{Max}(A)$,
there exists some $i$ such that $a_i \notin \m$.
Then the localization map $A \to A_{\m}$ factors as
$A \to A[a_i^{-1}] \to A_{\m}$.
Hence, adjunction gives an isomorphism:
\[
\opn{Hom}_A(A_{\m},J) \cong \opn{Hom}_{A[a_i^{-1}]}(A_{\m}, \opn{Hom}_A(A[a_i^{-1}],J)),
\]
The same argument as in the proof of \zcref{lem:allprimes} tells us that 
$\opn{Hom}_A(A_{\m},J)$ is K-injective over $A_{\m}$,
so by \zcref{thm:main-localize},
we deduce that $J$ is K-injective over $A$.
\end{proof}

As in the case of \zcref{ex:notforInj}, the assumption that $J$ is a complex of injective modules is essential. We learned the next example from \cite[Example 3.1]{Pos}:

\begin{exa}
Over the ring of integers $\mathbb{Z}$,
we have that for any $\p \in \opn{Max}(\mathbb{Z})$,
it holds that
\[
\opn{Hom}_{\mathbb{Z}}(\mathbb{Z}_{\p},\mathbb{Z}) = 0,
\]
which is injective (hence, K-injective) over $\mathbb{Z}_{\p}$.
However, $\mathbb{Z}$ is not an injective $\mathbb{Z}$-module,
so also not K-injective over $\mathbb{Z}$.
\end{exa}

\section{K-projective and K-flat complexes}\label{section:Proj}

In this final section, for the sake of completeness, we discuss the case of K-projective and K-flat complexes. This turns out to be much easier to prove, even without a noetherian assumption. We first discuss localization, and start with the K-flat case:

\begin{prop}\label{prop:flatLocal}
Let $A$ be a commutative ring,
and let $F$ be a complex of flat $A$-modules.
Then the following are equivalent:
\begin{enumerate}
\item $F$ is K-flat over $A$.
\item For all $\p \in \opn{Spec}(A)$, it holds that $F_{\p}$ is K-flat over $A_{\p}$.
\item For all maximal ideals $\m$ in $A$, it holds that $F_{\m}$ is K-flat over $A_{\m}$.
\end{enumerate}
If, moreover, $A$ is noetherian, it is further equivalent to
\begin{enumerate}
\setcounter{enumi}{3}
\item For all maximal ideals $\m$ in $\opn{Max}(A) \cap \opn{Sing}(A)$, it holds that $F_{\m}$ is K-flat over $A_{\m}$.
\end{enumerate}
\end{prop}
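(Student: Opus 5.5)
The plan is to prove the cycle $(1)\Rightarrow(2)\Rightarrow(3)\Rightarrow(1)$, and, in the noetherian case, $(3)\Leftrightarrow(4)$.

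For $(1)\Rightarrow(2)$, let $N$ be an acyclic complex of $A_{\p}$-modules. Then
\[
N\otimes_{A_{\p}} F_{\p} \cong N\otimes_{A_{\p}} (A_{\p}\otimes_A F) \cong N\otimes_A F,
\]
and the right-hand side is acyclic because $N$ is acyclic as a complex of $A$-modules. The implication $(2)\Rightarrow(3)$ is trivial.

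For $(3)\Rightarrow(1)$, let $X$ be an acyclic complex of $A$-modules. Homology commutes with localization, so it suffices to show that $(X\otimes_A F)_{\m}$ is acyclic for every maximal ideal $\m$. Since
\[
(X\otimes_A F)_{\m}\cong X_{\m}\otimes_{A_{\m}} F_{\m},
\]
$X_{\m}$ is acyclic over $A_{\m}$ by exactness of localization, and $F_{\m}$ is K-flat over $A_{\m}$ by assumption, this complex is acyclic. No noetherian hypothesis or acyclic-test reduction is needed here, which is why this case is much easier than \zcref{thm:main-localize}: localization is compatible with tensor products and with homology.

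Now assume $A$ is noetherian. The implication $(3)\Rightarrow(4)$ is trivial. For $(4)\Rightarrow(3)$ it suffices to show that if $\m\notin\opn{Sing}(A)$, then $F_{\m}$ is K-flat over $A_{\m}$. In that case $A_{\m}$ is a regular local ring, so it has finite global dimension $d$. I would argue that every complex of flat modules over a ring of finite global dimension is K-flat. Let $X$ be acyclic. Truncating $F$ as in the proof of \zcref{prop:homAcyc}, each brutal truncation $F^{\ge n}$ is a bounded below complex of flats, hence K-flat, so $X\otimes F^{\ge n}$ is acyclic. To pass to the limit, either write $F=\varinjlim_n F^{\ge -n}$ via a direct-limit argument in the other direction, or, more robustly, use that the cycle modules $Z^n(X)$ have flat dimension $\le d$ so that Tor-vanishing kills the relevant terms. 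Alternatively, dualize: the character complex $\opn{Hom}_{\mathbb{Z}}(F_{\m},\mathbb{Q}/\mathbb{Z})$ is a complex of injectives over the regular ring $A_{\m}$, hence K-injective by \cite[Proposition 2.1]{II}. Since K-flatness of $F$ is equivalent to K-injectivity of its character dual, this gives K-flatness directly.

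I expect this last step to be the main obstacle. I would use the character-dual route, since it reduces cleanly to the cited result of \cite{II} via the adjunction
\[
\opn{Hom}(X\otimes F,\mathbb{Q}/\mathbb{Z})\cong \opn{Hom}(X,\opn{Hom}(F,\mathbb{Q}/\mathbb{Z})),
\]
together with faithfulness of $\mathbb{Q}/\mathbb{Z}$-duality on homology.
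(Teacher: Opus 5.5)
Your proofs of $(1)\Rightarrow(2)\Rightarrow(3)\Rightarrow(1)$ and $(3)\Rightarrow(4)$ are the paper's. Your $(1)\Rightarrow(2)$ is marginally more direct: the isomorphism $N\otimes_{A_{\p}}F_{\p}\cong N\otimes_A F$ makes flatness of $A_{\p}$ unnecessary.

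The only real difference is in $(4)\Rightarrow(3)$. The paper just cites \cite[Proposition 3.4]{II}, which is the flat statement itself. You instead derive it from the injective statement \cite[Proposition 2.1]{II} by character duality:
\begin{itemize}
\item $\opn{Hom}_{\mathbb{Z}}(F_{\m},\mathbb{Q}/\mathbb{Z})$ is a complex of injective $A_{\m}$-modules by Lambek's criterion, hence K-injective.
\item The Hom-tensor adjunction, together with exactness and faithfulness of $\opn{Hom}_{\mathbb{Z}}(-,\mathbb{Q}/\mathbb{Z})$, then gives acyclicity of $X\otimes_{A_{\m}}F_{\m}$.
\end{itemize}
This is correct. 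It also means a single external input serves both this proposition and \zcref{lem:allprimes}. That input can even be made internal. Over a ring of global dimension $d<\infty$, dimension shifting gives $\opn{Ext}^1(M,Z^n)\cong\opn{Ext}^{d+1}(M,Z^{n-d})=0$ for the cycle modules $Z^n$ of an acyclic complex of injectives. So \zcref{prop:inj-contr} and \zcref{prop:kinjifAcInj} show that every complex of injectives is K-injective. The paper's citation is simply shorter.

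You should, however, discard the alternative sketches you list before settling on duality.
\begin{itemize}
\item A bounded \emph{below} complex of flat modules need not be K-flat; it is bounded \emph{above} complexes of flats that are K-flat. For example, over $R=k[x]/(x^2)$, let $P$ be the complex $R\xrightarrow{x}R\xrightarrow{x}\cdots$ in degrees $\ge 0$. It is quasi-isomorphic to $k$, yet $\mrm{H}^1(P\otimes_R k)=k$. If $P$ were K-flat, $P\otimes_R k$ would compute $k\otimes^{\mrm{L}}_R k$, which has no cohomology in positive degrees.
\item The bounded above brutal truncations, which are K-flat, recover $F$ only as an inverse limit, and inverse limits do not commute with $X\otimes-$.
\end{itemize}
Since you commit to the character-dual route, the proof stands. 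The truncation and direct-limit remarks are wrong as written, and the Tor-vanishing remark is too vague to count as an argument.
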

\begin{proof}
$(1) \implies (2)$: If $X$ is an acyclic complex of $A_{\p}$-modules,
then $X\otimes_{A_{\p}} F_{\p} \cong X\otimes_A F_{\p} \cong (X\otimes_A F)\otimes_A A_{\p}$,
and the latter is acyclic since $F$ is K-flat over $A$, and $A_{\p}$ is flat over $A$.
$(2) \implies (3)$ is trivial. $(3) \implies (1):$
Let $X$ be an acyclic complex of $A$-modules, let $n \in \mathbb{Z}$,
and let $\m \in \opn{Max}(A)$.
Then
\[
\mrm{H}^n\left(X\otimes_A F\right)_{\m} \cong \mrm{H}^n\left(X\otimes_A F_{\m}\right) \cong
\mrm{H}^n\left(X_{\m}\otimes_{A_{\m}} F_{\m}\right) = 0,
\]
which shows that $\opn{Supp}(\mrm{H}^n\left(X\otimes_A F\right)) \cap \opn{Max}(A) = \emptyset$.
Hence, $\mrm{H}^n\left(X\otimes_A F\right)$ = 0,
which implies that $F$ is K-flat over $A$.
Assume now that $A$ is noetherian. Clearly, then, $(3) \implies (4)$.
To see that $(4) \implies (3)$,
note that if $\m \in \opn{Max}(A)$ and $\m \notin \opn{Sing}(A)$,
by \cite[Proposition 3.4]{II}, it holds that $F_{\m}$ is K-flat over $A_{\m}$.
\end{proof}

The following result is the reason why the flat case covers the projective case:
\begin{prop}\label{prop:kflatkproj}
Let $A$ be a ring,
and let $P$ be a complex of projective $A$-modules.
Then $P$ is K-projective over $A$ if and only if $P$ is K-flat over $A$.
\end{prop}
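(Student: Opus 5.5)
The plan is to reduce both notions to a single acyclicity test, using the standard fact that for a complex $P$ of projective left $A$-modules, and an acyclic complex $X$, the complex $\opn{Hom}_A(P,X)$ is acyclic if and only if $X\otimes$-type tests against character modules vanish. Concretely, I would prove both directions separately, since they are of different nature.

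\textbf{K-projective $\implies$ K-flat.} Let $P$ be K-projective and let $X$ be an acyclic complex of right $A$-modules. To show $X\otimes_A P$ is acyclic, I would test against the injective cogenerator $\mathbb{Q}/\mathbb{Z}$. Write $(-)^\vee = \opn{Hom}_{\mathbb{Z}}(-,\mathbb{Q}/\mathbb{Z})$. By Hom-tensor adjunction,
\[
(X\otimes_A P)^\vee \cong \opn{Hom}_A(P, X^\vee).
\]
Since $\mathbb{Q}/\mathbb{Z}$ is injective over $\mathbb{Z}$, the complex $X^\vee$ of left $A$-modules is acyclic. K-projectivity of $P$ then makes $\opn{Hom}_A(P,X^\vee)$ acyclic. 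Faithfulness of $(-)^\vee$ on cohomology (it is exact and detects nonzero abelian groups) shows that $X\otimes_A P$ is acyclic. Projectivity of the terms is not even needed here.

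\textbf{K-flat $\implies$ K-projective.} This is the substantive direction. Let $P$ be a K-flat complex of projectives. Choose a K-projective resolution $Q \to P$ with $Q$ a complex of projectives (Spaltenstein), and let $C$ be its mapping cone. Then $C$ is an acyclic complex of projective modules. By the first direction $Q$ is K-flat, and K-flatness is closed under cones, so $C$ is K-flat. Thus $C$ is an acyclic K-flat complex, so for every module $M$, $M\otimes_A C$ is acyclic. This makes $C$ pure acyclic, meaning all its cycle modules $Z^n(C)$ are flat.

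The key input is then the theorem of Neeman (\emph{The homotopy category of flat modules, and Grothendieck duality}, Theorem 8.6; see also Benson--Goodearl in the bounded-kernel situation): an acyclic complex of projectives whose cycle modules are flat is contractible. Hence $C\simeq 0$ in $\cat{K}(A)$, so $Q \to P$ is a homotopy equivalence, and $P$ is K-projective.

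I expect this last step to be the main obstacle. It is a genuinely nontrivial result and is not derivable from the tools earlier in the paper; I would cite Neeman's result rather than reprove it. If a self-contained route were desired, I would try to show that each $\opn{Ext}^1_A(Z^{n+1}(C), Z^n(C))$ vanishes. The natural approach is that flat modules are filtered colimits of projectives (Lazard), combined with the cotorsion-type vanishing enjoyed by cycles of acyclic complexes of projectives. However, that is essentially the content of the cited theorem.
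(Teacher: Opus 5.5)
Your proof is correct. It takes a different route from the paper only in the sense that the paper gives no argument here: it simply quotes \cite[Theorem 7.8]{CH}. You reconstruct the result instead.
\begin{itemize}
\item K-projective $\Rightarrow$ K-flat follows by character duality, which, as you say, holds for any K-projective complex.
\item For the converse, you resolve $P$ by a K-projective complex of projectives $Q$. The cone $C$ is then an acyclic K-flat complex of projectives, hence has flat cycles, and Neeman's theorem says such complexes are contractible. (Benson--Goodearl is the periodic special case of that theorem, not a bounded one.)
\end{itemize}

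Two small points.
\begin{itemize}
\item The passage from ``$C$ is acyclic and K-flat'' to ``$M\otimes_A C$ is acyclic for every module $M$'' needs a line. Take a K-flat resolution $F\to M$. The cone of $F\otimes_A C\to M\otimes_A C$ is $\opn{cone}(F\to M)\otimes_A C$, which is acyclic because $C$ is K-flat. Also $F\otimes_A C$ is acyclic because $F$ is K-flat and $C$ is acyclic.
\item Drop your opening sentence about a ``standard fact'' linking $\opn{Hom}_A(P,X)$ to character-module tests. It is never used, and read literally it is essentially the statement being proved.
\end{itemize}

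What your route buys over the bare citation is transparency: it isolates Neeman's theorem as the only deep input. Your closing remark is also right in a precise sense, because the proposition conversely implies Neeman's theorem. An acyclic complex of projectives with flat cycles is K-flat, so by the proposition it is acyclic and K-projective, hence contractible. The two statements are therefore equivalent, and no proof can avoid input of that depth. The paper's citation is simply shorter.
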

\begin{proof}
This is \cite[Theorem 7.8]{CH}.
\end{proof}

\begin{cor}
Let $A$ be a commutative ring,
and let $P$ be a complex of projective $A$-modules.
Then the following are equivalent:
\begin{enumerate}
\item $P$ is K-projective over $A$.
\item For all $\p \in \opn{Spec}(A)$, it holds that $P_{\p}$ is K-projective over $A_{\p}$.
\item For all maximal ideals $\m$ in $A$, it holds that $P_{\m}$ is K-projective over $A_{\m}$.
\end{enumerate}
If, moreover, $A$ is noetherian, it is further equivalent to
\begin{enumerate}
\setcounter{enumi}{3}
\item For all maximal ideals $\m$ in $\opn{Max}(A) \cap \opn{Sing}(A)$, it holds that $P_{\m}$ is K-projective over $A_{\m}$.
\end{enumerate}
\end{cor}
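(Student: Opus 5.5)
The plan is to reduce everything to \zcref{prop:flatLocal} via \zcref{prop:kflatkproj}. The key observation is that localization preserves projectivity: if $P$ is a complex of projective $A$-modules, then each $(P^n)_{\p}=P^n\otimes_A A_{\p}$ is a projective $A_{\p}$-module. Hence $P_{\p}$ is a complex of projective $A_{\p}$-modules for every $\p\in\opn{Spec}(A)$, and in particular for every maximal ideal $\m$.

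The first step applies \zcref{prop:kflatkproj} over $A$: $P$ is K-projective over $A$ if and only if $P$ is K-flat over $A$. Since projective modules are flat, $P$ is a complex of flat $A$-modules, so \zcref{prop:flatLocal} applies to $F=P$. It shows that K-flatness of $P$ over $A$ is equivalent to K-flatness of $P_{\p}$ over $A_{\p}$ for all primes $\p$. It is also equivalent to the same condition for all maximal ideals $\m$, and, when $A$ is noetherian, for all $\m\in\opn{Max}(A)\cap\opn{Sing}(A)$.

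The second step applies \zcref{prop:kflatkproj} again, this time over each local ring $A_{\p}$ (respectively $A_{\m}$) to the complex of projectives $P_{\p}$ (respectively $P_{\m}$). This converts "K-flat over $A_{\p}$" into "K-projective over $A_{\p}$". Chaining the equivalences gives $(1)\iff(2)\iff(3)$, and under the noetherian hypothesis also $\iff(4)$.

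There is no real obstacle. The only point needing care is to check that \zcref{prop:kflatkproj} is legitimately applied to $P_{\p}$, which requires that $P_{\p}$ consist of projective $A_{\p}$-modules. This holds because base change of a direct summand of a free module is a direct summand of a free module.
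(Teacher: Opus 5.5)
Your proposal is correct and follows the paper's own proof, which simply combines \zcref{prop:flatLocal} with \zcref{prop:kflatkproj}. Your explicit check that each $P_{\p}$ is again a complex of projective $A_{\p}$-modules is exactly the detail needed to apply \zcref{prop:kflatkproj} over the local rings, and the paper leaves it implicit.
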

\begin{proof}
This follows from \zcref{prop:flatLocal} and \zcref{prop:kflatkproj}.
\end{proof}

Next, we discuss faithfully flat extensions.
Again, first for K-flatness.

\begin{prop}\label{prop:flatFF}
Let $A$ be a commutative ring,
and let $B$ be a (not necessarily commutative) central $A$-algebra which is faithfully flat over $A$.
Let $F$ be a complex of flat $A$-modules.
Then $F$ is K-flat over $A$ if and only if $B\otimes_A F$,
as a complex of flat left $B$-modules is K-flat over $B$.
\end{prop}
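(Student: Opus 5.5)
Both implications reduce to base change isomorphisms for tensor products together with faithful flatness; unlike the K-injective case, no noetherian hypothesis or structure theory of injectives is needed. Throughout, $B\otimes_A F$ is a complex of left $B$-modules via the left factor. For a complex $Y$ of right $B$-modules there is a natural isomorphism of complexes of abelian groups
\[
Y\otimes_B (B\otimes_A F) \cong Y\otimes_A F,
\]
where $Y$ is viewed as an $A$-module through the central map $A\to B$. Since $A$ is central in $B$, the left and right $A$-structures on $Y$ agree, so the right-hand side makes sense. This is associativity of tensor products, and I will use it in both directions.

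For the forward implication, assume $F$ is K-flat over $A$ and let $Y$ be an acyclic complex of right $B$-modules. Then $Y$ is still acyclic as a complex of $A$-modules, so $Y\otimes_A F$ is acyclic. By the isomorphism above, $Y\otimes_B(B\otimes_A F)$ is acyclic, hence $B\otimes_A F$ is K-flat over $B$. Flatness of $B$ is not even needed here, apart from $B\otimes_A F$ being a complex of flat $B$-modules, which holds for any ring map.

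For the converse, assume $B\otimes_A F$ is K-flat over $B$ and let $X$ be an acyclic complex of $A$-modules. Since $B$ is flat over $A$, the complex of right $B$-modules $X\otimes_A B$ is acyclic. Therefore
\[
(X\otimes_A B)\otimes_B (B\otimes_A F)\cong X\otimes_A B\otimes_A F \cong (X\otimes_A F)\otimes_A B
\]
is acyclic. The last isomorphism again uses that $A$ is central, together with commutativity of $A$, to move $B$ past $F$. Because $B$ is flat over $A$, taking cohomology commutes with $-\otimes_A B$, so $\mrm{H}^n(X\otimes_A F)\otimes_A B = 0$ for all $n$. Faithful flatness of $B$ then gives $\mrm{H}^n(X\otimes_A F)=0$, so $F$ is K-flat over $A$.

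The only point requiring care is keeping track of the sides of the module structures. One must check that the isomorphism $X\otimes_A B\otimes_A F\cong (X\otimes_A F)\otimes_A B$ is a genuine isomorphism of complexes, compatible with the differentials and the Koszul signs, and that centrality makes every $A$-balanced tensor product well defined. Once that bookkeeping is done, the argument is purely formal.
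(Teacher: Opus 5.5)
Your proposal is correct and is essentially the paper's proof. The forward direction uses the isomorphism $Y\otimes_B(B\otimes_A F)\cong Y\otimes_A F$. The converse uses flatness of $B$ to make $X\otimes_A B$ acyclic, then the isomorphism $(X\otimes_A B)\otimes_B(B\otimes_A F)\cong (X\otimes_A F)\otimes_A B$, and finally faithful flatness to descend acyclicity; you merely spell out that cohomology commutes with $-\otimes_A B$, which the paper leaves implicit.
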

\begin{proof}
If $F$ is K-flat over $A$,
and $X$ is an acyclic complex of right $B$-modules,
the isomorphism
\[
X \otimes_B (B\otimes_A F) \cong X\otimes_A F
\]
implies that $X \otimes_B (B\otimes_A F)$ is acyclic,
so that $B\otimes_A F$ is K-flat over $B$.
Conversely,
if $B\otimes_A F$ is K-flat over $B$,
and $X$ is an acyclic complex of $A$-modules,
then flatness of $A \to B$ implies that
$X\otimes_A B$ is acyclic over $B$.
Hence, K-flatness of $B\otimes_A F$ implies that
\[
(X\otimes_A B) \otimes_B (B\otimes_A F) \cong (X\otimes_A B\otimes_A F)
\]
is acyclic.
Forgetting its $B$-structure, and considering it as a complex of $A$-modules over the commutative ring $A$, 
we know that 
\[
X\otimes_A B\otimes_A F \cong (X\otimes_A F) \otimes_A B.
\]
Since $B$ is faithfully flat over $A$, 
the fact $(X\otimes_A F) \otimes_A B$ is acyclic implies that $X\otimes_A F$ is acyclic.
Hence, $F$ is K-flat over $A$.
\end{proof}

\begin{cor}
Let $A$ be a commutative ring,
and let $B$ be a (not necessarily commutative) central $A$-algebra which is faithfully flat over $A$.
Let $P$ be a complex of projective $A$-modules.
Then $P$ is K-projective over $A$ if and only if $B\otimes_A P$,
as a complex of projective left $B$-modules is K-projective over $B$.
\end{cor}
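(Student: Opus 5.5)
The corollary should follow by combining \zcref{prop:flatFF} with \zcref{prop:kflatkproj}, in the same way the preceding localization corollary was obtained from \zcref{prop:flatLocal}.

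First, we check that $B\otimes_A P$ is a complex of projective left $B$-modules. Each $P^n$ is a direct summand of a free module $A^{(I)}$. Hence $B\otimes_A P^n$ is a direct summand of $B\otimes_A A^{(I)} \cong B^{(I)}$, and so it is projective over $B$. In particular, $P$ is a complex of flat $A$-modules and $B\otimes_A P$ is a complex of flat left $B$-modules, so both \zcref{prop:flatFF} and \zcref{prop:kflatkproj} apply.

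Applying \zcref{prop:kflatkproj} over $A$, the complex $P$ is K-projective over $A$ if and only if it is K-flat over $A$. By \zcref{prop:flatFF}, which needs only that $B$ is a central faithfully flat $A$-algebra and no noetherian hypothesis, this holds if and only if $B\otimes_A P$ is K-flat over $B$. Applying \zcref{prop:kflatkproj} again, now over the (possibly noncommutative) ring $B$, this is equivalent to $B\otimes_A P$ being K-projective over $B$. Chaining these three equivalences gives the claim.

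There is no serious obstacle here. The only points to verify are that \zcref{prop:kflatkproj} is stated for an arbitrary ring, so it may be used over $B$ for left modules, and that base change preserves projectivity termwise, which is the direct summand argument above.
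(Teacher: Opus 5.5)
Your proposal is correct and matches the paper's proof, which likewise chains \zcref{prop:kflatkproj} over $A$, then \zcref{prop:flatFF}, then \zcref{prop:kflatkproj} over $B$. Your direct-summand check that $B\otimes_A P$ is termwise projective over $B$ is a detail the paper leaves implicit.
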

\begin{proof}
Again, this is immediate from \zcref{prop:flatFF} and \zcref{prop:kflatkproj}.
\end{proof}

\bibliographystyle{abbrv}
\bibliography{main}

@article {CI,
    AUTHOR = {Chen, Xiao-Wu and Iyengar, Srikanth B.},
     TITLE = {Support and injective resolutions of complexes over
              commutative rings},
   JOURNAL = {Homology Homotopy Appl.},
  FJOURNAL = {Homology, Homotopy and Applications},
    VOLUME = {12},
      YEAR = {2010},
    NUMBER = {1},
     PAGES = {39--44},
      ISSN = {1532-0073,1532-0081},
   MRCLASS = {13D02},
  MRNUMBER = {2594681},
MRREVIEWER = {Jaime-Luis\ Garcia-Roig},
       DOI = {10.4310/hha.2010.v12.n1.a4},
       URL = {https://doi.org/10.4310/hha.2010.v12.n1.a4},
}

@article {II,
    AUTHOR = {Iacob, Alina and Iyengar, Srikanth B.},
     TITLE = {Homological dimensions and regular rings},
   JOURNAL = {J. Algebra},
  FJOURNAL = {Journal of Algebra},
    VOLUME = {322},
      YEAR = {2009},
    NUMBER = {10},
     PAGES = {3451--3458},
      ISSN = {0021-8693,1090-266X},
   MRCLASS = {13D05 (13D09 13H05)},
  MRNUMBER = {2568347},
MRREVIEWER = {Cristodor\ Ionescu},
       DOI = {10.1016/j.jalgebra.2009.08.006},
       URL = {https://doi.org/10.1016/j.jalgebra.2009.08.006},
}

@article {CET,
    AUTHOR = {Christensen, Lars Winther and Estrada, Sergio and Thompson,
              Peder},
     TITLE = {Acyclic complexes and regular rings},
   JOURNAL = {Mediterr. J. Math.},
  FJOURNAL = {Mediterranean Journal of Mathematics},
    VOLUME = {23},
      YEAR = {2026},
    NUMBER = {2},
     PAGES = {Paper No. 57, 14},
      ISSN = {1660-5446,1660-5454},
   MRCLASS = {16E65 (16E05 16E50)},
  MRNUMBER = {5026329},
       DOI = {10.1007/s00009-026-03055-w},
       URL = {https://doi.org/10.1007/s00009-026-03055-w},
}

@article {St1,
    AUTHOR = {Stevenson, Greg},
     TITLE = {Support theory via actions of tensor triangulated categories},
   JOURNAL = {J. Reine Angew. Math.},
  FJOURNAL = {Journal f\"ur die Reine und Angewandte Mathematik. [Crelle's
              Journal]},
    VOLUME = {681},
      YEAR = {2013},
     PAGES = {219--254},
      ISSN = {0075-4102,1435-5345},
   MRCLASS = {18E30 (18D10)},
  MRNUMBER = {3181496},
MRREVIEWER = {Pawel\ Sosna},
       DOI = {10.1515/crelle-2012-0025},
       URL = {https://doi.org/10.1515/crelle-2012-0025},
}

@article {St2,
    AUTHOR = {Stevenson, Greg},
     TITLE = {Subcategories of singularity categories via tensor actions},
   JOURNAL = {Compos. Math.},
  FJOURNAL = {Compositio Mathematica},
    VOLUME = {150},
      YEAR = {2014},
    NUMBER = {2},
     PAGES = {229--272},
      ISSN = {0010-437X,1570-5846},
   MRCLASS = {14F05 (14B05 14M10 18E30)},
  MRNUMBER = {3177268},
MRREVIEWER = {Pawel\ Sosna},
       DOI = {10.1112/S0010437X1300746X},
       URL = {https://doi.org/10.1112/S0010437X1300746X},
}

@article {Pos,
    AUTHOR = {Positselski, Leonid},
     TITLE = {Local, colocal, and antilocal properties of modules and
              complexes over commutative rings},
   JOURNAL = {J. Algebra},
  FJOURNAL = {Journal of Algebra},
    VOLUME = {646},
      YEAR = {2024},
     PAGES = {100--155},
      ISSN = {0021-8693,1090-266X},
   MRCLASS = {13B30 (14A05 18F20 18G15)},
  MRNUMBER = {4708537},
MRREVIEWER = {Majid\ Eghbali},
       DOI = {10.1016/j.jalgebra.2024.02.006},
       URL = {https://doi.org/10.1016/j.jalgebra.2024.02.006},
}

@article {Kr,
    AUTHOR = {Krause, Henning},
     TITLE = {The stable derived category of a {N}oetherian scheme},
   JOURNAL = {Compos. Math.},
  FJOURNAL = {Compositio Mathematica},
    VOLUME = {141},
      YEAR = {2005},
    NUMBER = {5},
     PAGES = {1128--1162},
      ISSN = {0010-437X,1570-5846},
   MRCLASS = {18E30 (14F05 16E30 16E65 55U35)},
  MRNUMBER = {2157133},
MRREVIEWER = {J.\ P. C. Greenlees},
       DOI = {10.1112/S0010437X05001375},
       URL = {https://doi.org/10.1112/S0010437X05001375},
}

@article {Kr2,
    AUTHOR = {Krause, Henning},
     TITLE = {Approximations and adjoints in homotopy categories},
   JOURNAL = {Math. Ann.},
  FJOURNAL = {Mathematische Annalen},
    VOLUME = {353},
      YEAR = {2012},
    NUMBER = {3},
     PAGES = {765--781},
      ISSN = {0025-5831,1432-1807},
   MRCLASS = {18E05 (18A40 18E30)},
  MRNUMBER = {2923949},
MRREVIEWER = {Jue\ Le},
       DOI = {10.1007/s00208-011-0703-y},
       URL = {https://doi.org/10.1007/s00208-011-0703-y},
}

@article {Ne,
    AUTHOR = {Neeman, Amnon},
     TITLE = {The homotopy category of injectives},
   JOURNAL = {Algebra Number Theory},
  FJOURNAL = {Algebra \& Number Theory},
    VOLUME = {8},
      YEAR = {2014},
    NUMBER = {2},
     PAGES = {429--456},
      ISSN = {1937-0652,1944-7833},
   MRCLASS = {18E15},
  MRNUMBER = {3212862},
MRREVIEWER = {Marius\ Thaule},
       DOI = {10.2140/ant.2014.8.429},
       URL = {https://doi.org/10.2140/ant.2014.8.429},
}

@article {ChI,
    AUTHOR = {Christensen, Lars Winther and Iyengar, Srikanth B.},
     TITLE = {Tests for injectivity of modules over commutative rings},
   JOURNAL = {Collect. Math.},
  FJOURNAL = {Collectanea Mathematica},
    VOLUME = {68},
      YEAR = {2017},
    NUMBER = {2},
     PAGES = {243--250},
      ISSN = {0010-0757,2038-4815},
   MRCLASS = {13C11 (13D07)},
  MRNUMBER = {3633060},
MRREVIEWER = {Jason\ Gordon\ McCullough},
       DOI = {10.1007/s13348-016-0176-0},
       URL = {https://doi.org/10.1007/s13348-016-0176-0},
}

@article {Verasdanis,
    AUTHOR = {Verasdanis, Charalampos},
     TITLE = {Colocalizing subcategories of singularity categories},
   JOURNAL = {J. Algebra},
  FJOURNAL = {Journal of Algebra},
    VOLUME = {662},
      YEAR = {2025},
     PAGES = {608--624},
      ISSN = {0021-8693,1090-266X},
   MRCLASS = {18F99 (13D09 18G80)},
  MRNUMBER = {4795668},
       DOI = {10.1016/j.jalgebra.2024.08.029},
       URL = {https://doi.org/10.1016/j.jalgebra.2024.08.029},
}

@article {Emmanouil,
    AUTHOR = {Emmanouil, Ioannis},
     TITLE = {{$\bold K$}-flatness and orthogonality in homotopy categories},
   JOURNAL = {Israel J. Math.},
  FJOURNAL = {Israel Journal of Mathematics},
    VOLUME = {255},
      YEAR = {2023},
    NUMBER = {1},
     PAGES = {201--230},
      ISSN = {0021-2172,1565-8511},
   MRCLASS = {16D40 (18N55)},
  MRNUMBER = {4619533},
       DOI = {10.1007/s11856-022-2413-4},
       URL = {https://doi.org/10.1007/s11856-022-2413-4},
}

@misc{Stovicek,
      title={On purity and applications to coderived and singularity categories}, 
      author={Jan Stovicek},
      year={2014},
      eprint={1412.1615},
      archivePrefix={arXiv},
      primaryClass={math.CT},
      url={https://arxiv.org/abs/1412.1615}, 
      note={arxiv:1412.1615}
}

@article {CK,
    AUTHOR = {Christensen, Lars Winther and K\"oksal, Fatih},
     TITLE = {Injective modules under faithfully flat ring extensions},
   JOURNAL = {Proc. Amer. Math. Soc.},
  FJOURNAL = {Proceedings of the American Mathematical Society},
    VOLUME = {144},
      YEAR = {2016},
    NUMBER = {3},
     PAGES = {1015--1020},
      ISSN = {0002-9939,1088-6826},
   MRCLASS = {13C11 (13D07)},
  MRNUMBER = {3447655},
MRREVIEWER = {Jason\ Gordon\ McCullough},
       DOI = {10.1090/proc/12791},
       URL = {https://doi.org/10.1090/proc/12791},
}

@book {CFH,
    AUTHOR = {Christensen, Lars Winther and Foxby, Hans-Bj\o rn and Holm,
              Henrik},
     TITLE = {Derived category methods in commutative algebra},
    SERIES = {Springer Monographs in Mathematics},
 PUBLISHER = {Springer, Cham},
      YEAR = {[2024] \copyright 2024},
     PAGES = {xxiii+1119},
      ISBN = {978-3-031-77452-2; 978-3-031-77453-9},
   MRCLASS = {13D09},
  MRNUMBER = {4890472},
MRREVIEWER = {Hossein\ Faridian},
       DOI = {10.1007/978-3-031-77453-9},
       URL = {https://doi.org/10.1007/978-3-031-77453-9},
}

@article {Spal,
    AUTHOR = {Spaltenstein, N.},
     TITLE = {Resolutions of unbounded complexes},
   JOURNAL = {Compositio Math.},
  FJOURNAL = {Compositio Mathematica},
    VOLUME = {65},
      YEAR = {1988},
    NUMBER = {2},
     PAGES = {121--154},
      ISSN = {0010-437X,1570-5846},
   MRCLASS = {18E25 (32C35)},
  MRNUMBER = {932640},
MRREVIEWER = {Michael\ M.\ Kapranov},
       URL = {http://www.numdam.org/item?id=CM_1988__65_2_121_0},
}

@book {YekBook,
    AUTHOR = {Yekutieli, Amnon},
     TITLE = {Derived categories},
    SERIES = {Cambridge Studies in Advanced Mathematics},
    VOLUME = {183},
 PUBLISHER = {Cambridge University Press, Cambridge},
      YEAR = {2020},
     PAGES = {xi+607},
      ISBN = {978-1-108-41933-8},
   MRCLASS = {18-02 (13D09 16E35 18G80)},
  MRNUMBER = {3971537},
MRREVIEWER = {Peder\ Thompson},
}

@article {AEL,
    AUTHOR = {Aldrich, Stephen T. and Enochs, Edgar E. and Lopez Ramos, Juan
              A.},
     TITLE = {Derived functors of {H}om relative to flat covers},
   JOURNAL = {Math. Nachr.},
  FJOURNAL = {Mathematische Nachrichten},
    VOLUME = {242},
      YEAR = {2002},
     PAGES = {17--26},
      ISSN = {0025-584X,1522-2616},
   MRCLASS = {16E30},
  MRNUMBER = {1916846},
MRREVIEWER = {Rafail\ Alizade},
       DOI = {10.1002/1522-2616(200207)242:1<17::AID-MANA17>3.0.CO;2-F},
       URL =
              {https://doi.org/10.1002/1522-2616(200207)242:1<17::AID-MANA17>3.0.CO;2-F},
}

@misc{Stacks,
    shorthand    = {Stacks},
    author       = {The {Stacks Project Authors}},
    title        = {\textit{Stacks Project}},
    howpublished = {\url{https://stacks.math.columbia.edu}},
  }

@book{Matsumura,
  title={Commutative ring theory},
  author={Matsumura, Hideyuki},
  number={8},
  year={1989},
  publisher={Cambridge university press}
}

@article {PSY,
    AUTHOR = {Porta, Marco and Shaul, Liran and Yekutieli, Amnon},
     TITLE = {On the homology of completion and torsion},
   JOURNAL = {Algebr. Represent. Theory},
  FJOURNAL = {Algebras and Representation Theory},
    VOLUME = {17},
      YEAR = {2014},
    NUMBER = {1},
     PAGES = {31--67},
      ISSN = {1386-923X,1572-9079},
   MRCLASS = {13D07 (13B35 13C12 13D09)},
  MRNUMBER = {3160712},
MRREVIEWER = {Toma\ Albu},
       DOI = {10.1007/s10468-012-9385-8},
       URL = {https://doi.org/10.1007/s10468-012-9385-8},
}

@article {EGA4,
    AUTHOR = {Grothendieck, A.},
     TITLE = {\'El\'ements de g\'eom\'etrie alg\'ebrique. {IV}. \'Etude
              locale des sch\'emas et des morphismes de sch\'emas. {I}},
   JOURNAL = {Inst. Hautes \'Etudes Sci. Publ. Math.},
  FJOURNAL = {Institut des Hautes \'Etudes Scientifiques. Publications
              Math\'ematiques},
    NUMBER = {20},
      YEAR = {1964},
     PAGES = {259},
      ISSN = {0073-8301,1618-1913},
   MRCLASS = {14.05 (13.10)},
  MRNUMBER = {173675},
MRREVIEWER = {H.\ Hironaka},
       URL = {http://www.numdam.org/item/PMIHES_1964__20__5_0},
}

@article {CH,
    AUTHOR = {Christensen, Lars Winther and Holm, Henrik},
     TITLE = {The direct limit closure of perfect complexes},
   JOURNAL = {J. Pure Appl. Algebra},
  FJOURNAL = {Journal of Pure and Applied Algebra},
    VOLUME = {219},
      YEAR = {2015},
    NUMBER = {3},
     PAGES = {449--463},
      ISSN = {0022-4049,1873-1376},
   MRCLASS = {16E05 (16D40)},
  MRNUMBER = {3279365},
MRREVIEWER = {Jan\ Trlifaj},
       DOI = {10.1016/j.jpaa.2014.05.004},
       URL = {https://doi.org/10.1016/j.jpaa.2014.05.004},
}

@misc{Belmans,
  author       = {Belmans, Pieter},
  title        = {The restriction of a homotopy-injective complex to a {Z}ariski open subset is not necessarily homotopy-injective},
  year         = {2016},
  month        = {10},
  howpublished = {Self-published note},
  url          = {https://pbelmans.ncag.info/notes/restriction-homotopy-injective.pdf},
  note         = {Available at \url{https://pbelmans.ncag.info/notes/restriction-homotopy-injective.pdf}}
}

@article {Neeman96,
    AUTHOR = {Neeman, Amnon},
     TITLE = {The {G}rothendieck duality theorem via {B}ousfield's
              techniques and {B}rown representability},
   JOURNAL = {J. Amer. Math. Soc.},
  FJOURNAL = {Journal of the American Mathematical Society},
    VOLUME = {9},
      YEAR = {1996},
    NUMBER = {1},
     PAGES = {205--236},
      ISSN = {0894-0347,1088-6834},
   MRCLASS = {18E30 (14F05)},
  MRNUMBER = {1308405},
MRREVIEWER = {Luca\ Barbieri Viale},
       DOI = {10.1090/S0894-0347-96-00174-9},
       URL = {https://doi.org/10.1090/S0894-0347-96-00174-9},
}

@article {RG,
    AUTHOR = {Raynaud, Michel and Gruson, Laurent},
     TITLE = {Crit\`eres de platitude et de projectivit\'{e}. {T}echniques de
              ``platification'' d'un module},
   JOURNAL = {Invent. Math.},
  FJOURNAL = {Inventiones Mathematicae},
    VOLUME = {13},
      YEAR = {1971},
     PAGES = {1--89},
      ISSN = {0020-9910},
   MRCLASS = {14A15 (13C10)},
  MRNUMBER = {308104},
MRREVIEWER = {Masaki Maruyama},
       DOI = {10.1007/BF01390094},
       URL = {https://doi.org/10.1007/BF01390094},
}

@misc{perry,
      title={Faithfully flat descent for projectivity of modules}, 
      author={Alexander Perry},
      year={2010},
      eprint={1011.0038},
      archivePrefix={arXiv},
      primaryClass={math.AC},
      note={arxiv:1011.0038v1},
      URL={https://arxiv.org/abs/1011.0038}, 
}

@article{shFormal,
  title={Formalization in Lean of faithfully flat descent of projectivity},
  author={Shaul, Liran},
  journal={arxiv:2603.04376},
  year={2026}
}

@book {Eisenbud,
    AUTHOR = {Eisenbud, David},
     TITLE = {Commutative algebra},
    SERIES = {Graduate Texts in Mathematics},
    VOLUME = {150},
      NOTE = {With a view toward algebraic geometry},
 PUBLISHER = {Springer-Verlag, New York},
      YEAR = {1995},
     PAGES = {xvi+785},
      ISBN = {0-387-94268-8; 0-387-94269-6},
   MRCLASS = {13-01 (14A05)},
  MRNUMBER = {1322960},
MRREVIEWER = {Matthew\ Miller},
       DOI = {10.1007/978-1-4612-5350-1},
       URL = {https://doi.org/10.1007/978-1-4612-5350-1},
}

\end{document}